\numberwithin{equation}{section}
\newtheorem{theorem}{Theorem}[section]
\newtheorem{lemma}[theorem]{Lemma}
\theoremstyle{definition}
\newtheorem{definition}[theorem]{Definition}
\newcommand{\NN}{\mathbb{N}}
\newcommand{\ZZ}{\mathbb{Z}}
\newcommand{\PP}{\mathbb{P}}
\newcommand{\TT}{\mathbb{T}}
\newcommand{\xx}{\mathbf{x}}
\newcommand{\G}{\mathcal{G}}
\renewcommand{\H}{\mathcal{H}}
\newcommand{\supp}{\operatorname{supp}}
\newcommand{\reg}{\operatorname{reg}}
\newcommand{\wt}{\operatorname{wt}}
\theoremstyle{theorem}
\newcommand{\set}[1]{\left\{ #1 \right \} }
\newcommand{\rt}{\rightarrow}
\def\noqed{\renewcommand{\qedsymbol}{}}
\begin{document}


\title[Ideals over complete multipartite graphs]{Vanishing ideals over\\ complete multipartite graphs}

\thanks{The first author was partially funded by CMUC, through
European program COMPETE/FEDER and FCT project PEst-C/MAT/UI0324/2011.
Part of this work was developed during a research visit to 
to CINVESTAV of the IPN, M\'exico, in which the first author 
benefited from the financial support of a research grant from Santander Totta Bank (Portugal). 
The second author is a member of the 
Center for Mathematical Analysis, Geometry, and Dynamical Systems, Departamento de Matematica, 
Instituto Superior Tecnico, 1049-001 Lisboa, Portugal.}

\author{Jorge Neves}
\address[Jorge Neves]{CMUC, Department of Mathematics, University of Coimbra.\hfill\newline\hfill \indent Apartado 3008 - EC Santa Cruz,
3001-501 Coimbra, Portugal.}
\email{neves@mat.uc.pt}

\author{Maria Vaz Pinto}
\address[Maria Vaz Pinto]{
Departamento de Matem\'atica\\
Instituto Superior T\'ecnico\hfill\newline
\indent Universidade T\'ecnica de Lisboa\\
Avenida Rovisco Pais, 1\\
1049-001 Lisboa, Portugal
}
\email{vazpinto@math.ist.utl.pt}

\subjclass[2010]{Primary 13C13; Secondary 13P25, 14G15, 14G50, 11T71, 94B05, 94B27.}

\begin{abstract} 
We study the vanishing ideal of the parametrized algebraic toric set associated to the 
complete multipartite graph $\G=\mathcal{K}_{\alpha_1,\dots,\alpha_r}$ over a finite field 
of order $q$. We give an explicit family of binomial generators for this lattice ideal, consisting
of the generators of the ideal of the torus, (referred to as type I generators),
a set of quadratic binomials corresponding to the cycles of length $4$ in $\G$ and 
which generate the \emph{toric algebra of $\G$} (type II generators)  and a set of binomials of degree $q-1$ 
obtained combinatorially from $\G$ (type III generators). Using this explicit family of generators of the 
ideal, we show that its Castelnuovo--Mumford regularity is equal to $\max\set{\alpha_1(q-2),\dots,\alpha_r(q-2),
\lceil (n-1)(q-2)/2\rceil}$, where $n=\alpha_1+\dots + \alpha_r$.
\end{abstract}

\maketitle


\section{introduction}\label{sec: introduction}

The class of vanishing ideals of parameterized algebraic toric sets over a finite field was first studied by
Renteria, Simis and Villarreal in \cite{algcodes}. Here we focus on the case when 
the set is parameterized by the edges of a simple graph. 
Let $K$ be a finite field of order $q$ and $\G$ a simple graph with $n$ vertices $\set{v_1,\dots,v_n}$ and nonempty edge
set. Given a choice of ordering of the edges, given by a bijection
$e\colon \set{1,\dots,s} \rt E(G)$, 
and writing $\xx^{e(i)}=x_jx_k$ for every $\xx=(x_1,\dots,x_n)\in (K^*)^n$ and $e(i)=\set{v_j,v_k}\in E(\G)$, 
we define the associated \emph{algebraic toric
set} as the subset of $\PP^{s-1}$ given by:
\begin{equation}\label{eq: the algebraic toric set}
X = \set{(\xx^{e_1},\dots,\xx^{e_s})\in \PP^{s-1} : \xx \in (K^*)^n},
\end{equation}
where we abbreviate the notation $e(i)$ to $e_i$.

The variety $X$ can also be seen as the subgroup of $\TT^{s-1}\subset \PP^{s-1}$ given by the 
image of the group homomorphism $(K^*)^n \rt \TT^{s-1}$ defined by $\xx \mapsto (\xx^{e_1},\dots,\xx^{e_s})$.
The \emph{vanishing ideal of $X$}, which we denote by  $I(X)$, is the ideal generated by all homogeneous forms in
\mbox{$S=K[t_1,\dots,t_s]$} that vanish on $X$. This ideal is a Cohen--Macaulay, radical, lattice ideal of codimension 
$s-1$ (\emph{cf.}~\cite[Theorem 2.1]{algcodes}). 
One motivation for the study of these ideals lies in the fact that they 
combine the toric ideal of the edge subring of a graph, $P(\G)\subset K[t_1,\dots,t_s]$, 
with the arithmetic of the finite field. This relation is expressed in the equality:
\begin{equation}\label{eq: I in terms of the toric ideal}
I(X) = \bigl( \bigl [ P(\G)+(t_2^{q-1}-t_1^{q-1},\dots,t_{s}^{q-1}- t_1^{q-1}) \bigr ] : (t_1\cdots t_s)^\infty \bigr) 
\end{equation}
which (in particular) holds for any connected \emph{or} bipartite $\G$ (\emph{cf.}~\cite[Corollary~2.11]{algcodes}). 
Recall that $P(\G)$ is the kernel of the epimorphism $K[t_1,\dots,t_s]\rt K[\G]$ given by $t_i\mapsto y^{e_i}$, where
$K[\G]$ is the edge subring of $\G$, \emph{i.e.}, the subring of the polynomial ring $K[y_1,\dots,y_n]$ given by 
\mbox{$K[G]=K[y^{e_i} : i=1,\dots,s]$}. For a survey on the subject of toric ideals of edge subrings of graphs we
refer the reader to \cite[Chapter~5]{graphsrings}. 
\smallskip

The properties of $I(X)$ (even in the general case of disconnected graphs) are reflected
in $\G$ and \emph{vice versa}. For one, the degree (or multiplicity) of $S/I(X)$ is equal to 
\begin{equation}\label{eq: degree of S/I}
\renewcommand{\arraystretch}{1.3}
\left \{ 
\begin{array}{l}
\bigr(\frac{1}{2}\bigl)^{\gamma-1}(q-1)^{n-m+\gamma-1}{}, \text{ if }
\gamma\geq 1\text{ and }q \text{ is odd}, \\
(q-1)^{n-m+\gamma-1}{}, \text{ if }
\gamma\geq 1\text{ and }q \text{ is even}, \\
(q-1)^{n-m-1}, \text{ if }\gamma=0,
\end{array}
\right.
\end{equation}
where $m$ is the number of connected components of $\G$, of which exactly $\gamma$ are non-bipartite 
(\emph{cf}.~\cite[Theorem 3.2]{evencycles}). 
Another invariant of interest is the \emph{index of regularity} of the quotient $S/I(X)$, which, 
since this quotient is Cohen--Macaulay of dimension $1$, coincides with the Castelnuovo--Mumford regularity. 
To present day knowledge, there is no single general formula expressing the regularity of $S/I(X)$ in terms 
of the data of $\G$. It is known that when $\G=\mathcal{C}_{2k}$, an even cycle of length $2k$, 
the regularity of $S/I(X)$ is $(k-1)(q-2)$ (\emph{cf.}~\cite[Theorem~6.2]{evencycles}). In the case of an odd cycle, $X$ coincides
with $\TT^{s-1}$ (\emph{cf.}~\cite[Corollary~3.8]{algcodes}) --- another way of seeing this, using 
(\ref{eq: I in terms of the toric ideal}), is that
if $\G$ is an odd cycle then $P(\G)=(0)$ (\emph{cf}.~\cite{reesdegree}); accordingly 
$I(X)=(t_2^{q-1}-t_1^{q-1},\dots,t_{s}^{q-1}- t_1^{q-1})$ is a  complete intersection 
(see also~\cite[Theorem~1]{GRH}). In this case the 
regularity is $(s-1)(q-2)=(n-1)(q-2)$, where $n$ (odd) is the number of vertices (and edges) of $\G$ 
(\emph{cf.}~\cite[Lemma~1]{GRH}).
If $\G=\mathcal{K}_{a,b}$ is a complete bipartite graph, the regularity of $S/I(X)$ is given by
\[
\textstyle \max\set {(a-1)(q-2),(b-1)(q-2)}
\]
(\emph{cf.}~\cite[Corollary 5.4]{GR}). Recently, a formula for the regularity of $S/I(X)$ in the case of a complete 
graph $\G=\mathcal{K}_n$ was given in \cite{GRS2}. In this case, if $n>3$,
\begin{equation}\label{eq: formula for reg in complete case}
\textstyle \reg S/I(X) =\lceil(n-1)(q-2) /2 \rceil. 
\end{equation}
Notice that the case $\G=\mathcal{K}_2$ is trivial and case $\G=\mathcal{K}_3=\mathcal{C}_3$ was already discussed. 
\smallskip

In this work we focus on the case of $\G = \mathcal{K}_{\alpha_1,\dots,\alpha_r}$, a complete multipartite graph
with \mbox{$\alpha_1+\dots+\alpha_r=n$} vertices. One of our main results, Theorem~\ref{thm: regularity}, 
states that in this case, if $r\geq 3$ and $n\geq 4$,
\begin{equation}\label{eq: our regularity in the introduction}
\textstyle \reg S/I(X) = \max \set{\alpha_1(q-2),\dots,\alpha_r(q-2), \lceil (n-1)(q-2)/2 \rceil}. 
\end{equation}
This formula generalizes (\ref{eq: formula for reg in complete case}); it contains the case of 
the complete graph by setting $\alpha_1=\cdots = \alpha_r = 1$. However, as far as the proof of Theorem~\ref{thm: 
regularity}
is concerned, we restrict to the case when $\mathcal{K}_{\alpha_1,\dots,\alpha_r}$ is not a complete graph. Moreover,
the methods used in this work are distinctly orthogonal to those used in \cite{GRS2}. Our main interest being the lattice ideal
$I(X)$, we rely on a precise description of a generating set of binomials to prove the statement on the regularity. 
In Theorem~\ref{thm: generators}, we show that a given set of binomials generates $I(X)$. These binomials are 
classified into $3$ classes: the binomials $t_i^{q-1}-t_j^{q-1}$, for every $i\not = j$, which, 
by (\ref{eq: I in terms of the toric ideal}), 
belong to $I(X)$ no matter which $\G$ we take, and are referred to as \emph{type I generators}; the binomials
$t_it_j-t_kt_l\in P(\G)$, for each $e_ie_ke_je_l$ cycle of length $4$ contained in $\G$, are referred to as 
\emph{type II generators}; finally the \emph{type III generators}, obtained from weighted subgraphs of $\G$, 
are described in full detail in the beginning of Section~\ref{sec: generators}.
Theorem~\ref{thm: generators} applies without restrictions on $\alpha_1,\dots,\alpha_r$. In particular, it yields a 
generating set for $I(X)$ in the case of a complete bipartite graph, which, despite the result on the regularity 
of $S/I(X)$ in \cite{GR}, was missing in the literature.
\smallskip

This problem area has been attracting increasing interest. The field of binomial ideals has been quite
explored and its general theory can be found in Eisenbud and Sturmfels article ~\cite{ES}.  
In our present setting, these binomial ideals have a remarkable application to coding theory.
Associating to $X$ an evaluation code, one can relate two of its basic parameters (the length and the dimension) 
to $I(X)$ by a straightforward application of the Hilbert function (\emph{cf.}~\cite{Hansen1,Hansen2}); 
moreover a set of generators of $I(X)$ can make way to computing the Hamming distance of the code 
(\emph{cf.}~\cite{ci-codes}). 
There has been substantial recent research exploring the relation to 
coding theory (\emph{cf}.~\cite{evencycles,algcodes,ci-codes,vanishing}) and also
focusing on the vanishing ideal of parameterized algebraic toric sets (\emph{cf.}~\cite{LV,degentorus}).

Let us describe the structure of this paper. In Section~\ref{sec: prelimiaries} we establish the definitions and notations used
in the article. In that section, Lemma~\ref{lemma: Vanishing lemma} provides a useful characterization of a binomial
in $I(X)$ by a condition on the associated weighted subgraph of $\G$. In Section~\ref{sec: generators}, we 
describe $3$ families of binomials and prove that they form a generating
set for $I(X)$ --- Theorem~\ref{thm: generators}. 
In Section~\ref{sec: regularity}, we prove Theorem~\ref{thm: regularity},
that states that under the assumption that $r\geq 3$ and $n=\alpha_1+\cdots+\alpha_r \geq 4$, the regularity of
$S/I(X)$ is given by the integer $d$ of formula (\ref{eq: our regularity in the introduction}). We show this by:
i) exhibiting a monomial in $K[t_1,\dots,t_s]$ of degree $d$
and showing that that monomial does not belong to $I(X)+(t_1)$, where $t_1\in K[t_1,\dots,t_s]$ is a variable; 
ii) and by showing that every monomial in $K[t_1,\dots,t_s]$ of degree $d+1$ is in $I(X)+(t_1)$.

\smallskip

For any additional information in the theory of monomial ideals and Hilbert functions, we refer to 
\cite{Sta1, monalg}, and for graph theory we refer to \cite{Boll}. 

\smallskip

\emph{Acknowledgement.} The authors thank Rafael Villarreal for many helpful discussions.


\section{Preliminaries}\label{sec: prelimiaries}

Let $K$ be a finite field of order $q$. 
Throughout, $\G=\mathcal{K}_{\alpha_1,\dots,\alpha_r}$ will denote the complete multipartite graph. More precisely, a graph  
with vertex set $V_\G=\{v_1, \ldots, v_n\}$ endowed with a partition $V_\G=P_1\sqcup \cdots \sqcup P_r$, satisfying
$\# P_i = \alpha_i$, for $i=1,\dots,r$, and $\alpha_1 + \cdots + \alpha_r=n$, such that 
$\set{v_k,v_l}$ is an edge of $\G$ if and only if $v_k\in P_i$ and $v_l\in P_j$ with $i\not =j$.
An important case to consider is that when all $\alpha_i=1$, \emph{i.e.}, when $\G$ is 
the complete graph on $r$ vertices.
Fix an ordering of the set of edges of the graph, $E(\G)$,
given by $e \colon \set{1,\dots,s} \rt E(\G)$, where $s=((\sum_{i=1}^r \alpha_i)^2-\sum_{i=1}^r \alpha_i^2)/2$. We
write $e_j$ for $e(j)$. Let $S=K[t_1,\dots,t_s]$ be a polynomial ring on $s=\# E(\G)$ variables. We fix a bijection between 
the set of variables of $S$ and $E(\G)$, given by the map $t_i\mapsto e_i$. To ease notation, 
we also use $t_{\set{v_k,v_l}}$ for the variable corresponding to the edge $\set{v_k,v_l}$. 
Let $X\subset \PP^{s-1}$ be the algebraic toric set associated to $\G$, 
as defined in (\ref{eq: the algebraic toric set}), and denote by $I(X)\subset S$ its vanishing ideal.
\medskip

The identification of a monomial (or a binomial) of $S$ with a weighted subgraph of $\G$ will play an important role 
in Sections~\ref{sec: generators} and \ref{sec: regularity}. If $a=(a_1,\dots,a_s)\in \NN^s$ we denote by $t^a$ the monomial 
of $S$ given by $t_1^{a_1}\cdots t_s^{a_s}$.

\begin{definition}\label{def: weighted subgraph} 
Given $g=t^a\in S$, where $a\in \NN^s$, the \emph{weighted subgraph associated to $t^a$}, denoted by $\H_g$, is 
the subgraph of $\G$ with the same vertex set, with the set of edges corresponding to the variables dividing 
$t^a$ and with weight function given by $a$, \emph{i.e.}, such that the weight of the edge 
$e_i$ is $a_i$ for each $i\in \supp a$. The \emph{weighted degree} of a vertex $v$ is the sum of the weights of
all edges incident to $v$ in $\H_{g}$.
\end{definition}

Let us denote the weight, $a_i$, of an edge $e_i$ in $\H_{g}$ by $\wt_{\H_{g}}(e_i)= a_i$. 
We denote the weighted degree of $v$ by $\wt_{\H_g}(v)$. This 
number is zero if no edge in $\H_g$ is incident to $v$.

\begin{definition}
Given a binomial, $f=t^a-t^b\in S$, with $\supp a \cap \supp b=\emptyset$, the \emph{weighted subgraph
associated to $f$}, which we denote by $\H_f$, is defined by $\H_{t^a}\cup \H_{t^b}$. 
\end{definition}

The edges of $\H_f$ may be colored with two colors using the partition 
$E(\H_{t^a})\sqcup E(\H_{t^b})$. We will use a solid line 
for edges corresponding to variables dividing $t^a$ and a dotted line for the edges 
corresponding to variables dividing $t^b$. We refer to the former as black edges and to the latter as dotted edges.
Although one can also define the notion of weighted degree of a vertex in this case, 
we will only use it for monomials. The usual notion of degree of a vertex, \emph{i.e.}, the number of edges incident
to it, disregarding weights and coloring of the edges, will be used.

\begin{lemma}\label{lemma: Vanishing lemma}
Let $\G$ be any graph and $X$ its associated algebraic toric set.
Let $f=t^a-t^b$ be a binomial (not necessarily homogeneous). Then, 
$f$ vanishes on $X$ if and only if for all $v\in V_\G$, 
$\wt_{\H_{t^a}}(v)\equiv \wt_{\H_{t^b}}(v) \pmod{q-1}$.
\end{lemma}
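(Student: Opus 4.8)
The key point is to unwind what it means for the binomial $f = t^a - t^b$ to vanish at a point of $X$. A point of $X$ has the form $(\xx^{e_1},\dots,\xx^{e_s})$ for some $\xx = (x_1,\dots,x_n) \in (K^*)^n$. So I would first compute, for a monomial $t^a = t_1^{a_1}\cdots t_s^{a_s}$, the value $\prod_{i=1}^s (\xx^{e_i})^{a_i}$. Writing $e_i = \{v_{k_i}, v_{l_i}\}$ so that $\xx^{e_i} = x_{k_i} x_{l_i}$, each exponent $a_i$ contributes $a_i$ to the exponent of $x_{k_i}$ and of $x_{l_i}$; collecting across all edges, the exponent of $x_v$ in $\prod_i (\xx^{e_i})^{a_i}$ is exactly $\sum_{e_i \ni v} a_i = \wt_{\H_{t^a}}(v)$. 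Hence the evaluation of $t^a$ at $(\xx^{e_1},\dots,\xx^{e_s})$ equals $\prod_{v \in V_\G} x_v^{\wt_{\H_{t^a}}(v)}$, and similarly for $t^b$.

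Therefore $f$ vanishes at this point of $X$ if and only if $\prod_v x_v^{\wt_{\H_{t^a}}(v)} = \prod_v x_v^{\wt_{\H_{t^b}}(v)}$, i.e. $\prod_v x_v^{\,\wt_{\H_{t^a}}(v) - \wt_{\H_{t^b}}(v)} = 1$ in $K^*$. Now $f$ vanishes on all of $X$ precisely when this holds for every $\xx \in (K^*)^n$. Set $d_v = \wt_{\H_{t^a}}(v) - \wt_{\H_{t^b}}(v) \in \ZZ$. I must show $\prod_v x_v^{d_v} = 1$ for all $x_v \in K^*$ if and only if $d_v \equiv 0 \pmod{q-1}$ for every $v$. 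The ``if'' direction is immediate because $K^*$ is a group of order $q-1$, so $x_v^{q-1} = 1$ for every $x_v \in K^*$. For the ``only if'' direction I would fix a generator $g$ of the cyclic group $K^* \cong \ZZ/(q-1)$: choosing $x_{v_0} = g$ for one vertex $v_0$ and $x_v = 1$ for all other $v$ forces $g^{d_{v_0}} = 1$, hence $(q-1) \mid d_{v_0}$; since $v_0$ was arbitrary this gives the congruence for every vertex. Unwinding $d_v$, this is exactly the stated condition $\wt_{\H_{t^a}}(v) \equiv \wt_{\H_{t^b}}(v) \pmod{q-1}$ for all $v \in V_\G$.

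The only mild subtlety — not really an obstacle — is the bookkeeping in the first step, identifying the exponent of $x_v$ in the evaluated monomial with the weighted degree of $v$ in $\H_{t^a}$; here the disjointness hypothesis $\supp a \cap \supp b = \emptyset$ is not actually needed for the argument, since one works with each monomial separately and compares at the end, but it is harmless. Note also that homogeneity of $f$ plays no role: the argument is purely multiplicative in $K^*$, which is why the lemma is stated for $f$ not necessarily homogeneous. I would present the proof in the two displayed steps above: (1) evaluation of $t^a$ equals $\prod_v x_v^{\wt_{\H_{t^a}}(v)}$, and (2) the elementary fact that a Laurent monomial in the coordinates is identically $1$ on $(K^*)^n$ iff all its exponents are divisible by $q-1$, using a generator of $K^*$ for the nontrivial direction.
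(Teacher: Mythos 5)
Your proof is correct and follows essentially the same route as the paper's: the ``only if'' direction specializes to points with one coordinate equal to a generator of $K^*$ and all others equal to $1$, exactly as the paper does, and the ``if'' direction is the same direct evaluation showing $f(\xx)=\prod_v x_v^{\wt_{\H_{t^a}}(v)}-\prod_v x_v^{\wt_{\H_{t^b}}(v)}=0$. Your extra bookkeeping step identifying the exponent of $x_v$ with the weighted degree is left implicit in the paper but is the same computation.
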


\begin{proof}
\noqed
Suppose that $f=t^a-t^b$ vanishes on $X$ and let $v=v_{l_0} \in V_\G$, for $l_0\in \set{1,\dots,n}$. 
Let \mbox{$\xx=(\dots,x_lx_{l'},\dots)\in X$} be given by $x_{l_0}=u$, where $u$ is a generator of $K^*$, and $x_l=1$, 
for any $l\not = l_0$. Then 
\[\textstyle f(\xx)=0 \implies u^{\wt_{\H_{t^a}}(v_{l_0})} - u^{\wt_{\H_{t^b}}(v_{l_0})}=0 \iff 
 \wt_{\H_{t^a}}(v_{l_0})\equiv \wt_{\H_{t^b}}(v_{l_0}) \pmod{q-1}.\]
Conversely, let $\xx=(\dots,x_lx_{l'},\dots)\in X$ be any point of $X$ and assume that 
$\wt_{\H_{t^a}}(v)\equiv \wt_{\H_{t^b}}(v) \pmod{q-1}$, for every $v\in V_\G$.
Then 
\[
f(\xx)=\prod_{l=1}^n x_{l}^{\wt_{\H_{t^a}}(v_l)} - \prod_{l=1}^n x_{l}^{\wt_{\H_{t^b}}(v_l)}=0.
\;\;\;\;\;
\quad \square
\]
\end{proof}

Let $I\subset S=K[t_1,\dots,t_s]$ be (any) homogeneous ideal and let 
$H_{S/I}(d)=\dim_K S_d/I_d$, for every $d\geq 0$, be the Hilbert function of 
the graded ring $S/I$. 
There exists a polynomial $h_{S/I}(t)$ in $\ZZ[t]$ of degree $k-1$, where $k=\dim S/I$, 
such that $H_{S/I}(d)=h_{S/I}(d)$ for $d\gg 0$. The \emph{degree} or \emph{multiplicity} of $S/I$ is, by definition,
the leading coefficient of $h_{S/I}(t)$ multiplied by $(k-1)!$. The \emph{index of regularity} of $S/I$
is the least integer $\ell\geq 0$ such that $H_{S/I}(d)=h_{S/I}(d)$ for $d\geq \ell$. Letting
$b_{ij}=\mbox{dim}_K\operatorname{Tor}_i(K,S/I)_j$ be the \emph{graded Betti numbers} of $S/I$, the \emph{Castelnuovo--Mumford
regularity} of $S/I$ is, by definition, $\max\{j-i\vert\,b_{ij}\neq 0\}$. 
If $I$ is Cohen--Macaulay and $\dim S/I = 1$, then the index of regularity of 
$S/I$ coincides with the Castelnuovo--Mumford regularity of $S/I$ 
(cf.~\cite[Corollary~2.5.14 and Proposition~4.2.3 ]{monalg}).
As this is the context of this work, with $I=I(X)$ (\emph{cf.}~\cite[Theorem~2.2]{algcodes}), 
we will denote both by $\reg(S/I(X))$.
\smallskip

Hence, in our case, the Hilbert function of the ring $S/I(X)$ is constant for $d\geq \reg(S/I(X))$.
The constant value at which it stabilizes is $|X|$, and since the Hilbert function
of $S/I(X)$ is strictly increasing in the range $0\leq d \leq \reg(S/I(X))$ 
(cf.~\cite{duursma-renteria-tapia}, ~\cite{geramita-cayley-bacharach}),
the regularity of $S/I(X)$ is equal to the first $d$ for which it attains the value $|X|$. Note that
$|X|$ is equal to $(q-1)^{n-1}$ by the formula (\ref{eq: degree of S/I}) --- recall that $n=\# V_\G$.


\section{A generating set for $I(X)$}\label{sec: generators}

In this section we describe a generating set for the vanishing ideal $I(X)$ of the algebraic toric set
 associated to a complete multipartite graph $\G=K_{\alpha_1,\dots,\alpha_r}$. 
 We will distinguish $3$ types of generators. The \emph{type I generators}
are of the form $t_i^{q-1}-t_j^{q-1}$, for $1\leq i,j\leq s$.
The \emph{type II generators} are in $1$-to-$1$ correspondence with the cycles of length $4$, $\set{e_i,e_j,e_k,e_l}$, 
in $\G$ (\emph{cf.} see Figure~\ref{fig: 4-cycle in G}). Each cycle yields the generator: $t_it_k-t_jt_l$.

\begin{figure}[ht]
\begin{minipage}[b]{0.45\linewidth}
\centering
\begin{picture}(200,75)(-30,40)
\put(30,90){$\bullet$}
\put(19,90){$v_{i_1}$}
\put(62,112){$\bullet$}
\put(62,122){$v_{i_4}$}
\put(92,81){$\bullet$}
\put(100,81){$v_{i_3}$}
\put(70,49){$\bullet$}
\put(69,41){$v_{i_2}$}
\put(32.5,92){\line(1,-1){40}}
\put(43,65){$e_i$}
\multiput(72,49)(2,3){11}{$\cdot$}
\put(82,102){$e_k$}
\put(65,114){\line(1,-1){30}}
\put(88,61){$e_j$}
\multiput(32.5,92)(3,2){11}{$\cdot$}
\put(37,110){$e_l$}
\end{picture}
\caption{A $4$-cycle yielding type \emph{II} generator.}
\label{fig: 4-cycle in G}
\end{minipage}
\hspace{-1.5cm}
\begin{minipage}[b]{0.45\linewidth}
\centering
\begin{picture}(200,140)(-50,40)
\put(-20,90){$\bullet$}
\put(-37,87){$v_{l_{n'+1}}$}
\multiput(-19,90)(2,3){21}{$\cdot$}
\put(-10,130){$e_{j_1}$}
\multiput(-19,90)(3,1.8){20}{$\cdot$}
\put(6,118){$e_{j_2}$}
\multiput(-19,90)(4,.4){20}{$\cdot$}
\put(20,102){$e_{j_3}$}
\multiput(-19,90)(4,-1.5){27}{$\cdot$}
\put(18,67){$e_{j_{n'}}$}
\multiput(67,83)(2,-3){3}{$\cdot$}
\put(20,150){$\bullet$}
\put(13,160){$v_{l_1}$}
\put(38,125){$\bullet$}
\put(28,134){$v_{l_2}$}
\put(56,98){$\bullet$}
\put(47,108){$v_{l_3}$}
\put(87,50){$\bullet$}
\put(90,40){$v_{l_{n'}}$}
\put(23,152){\line(6,-1){80}}
\put(50,153){$e_{i_1}$}
\put(40,127){\line(6,1){64}}
\put(55,136){$e_{i_2}$}
\put(58,100){\line(6,5){47}}
\put(65,120){$e_{i_3}$}
\put(89.75,51.5){\line(1,6){14.5}}
\put(103,100){$e_{i_{n'}}$}
\put(102,136){$\bullet$}
\put(112,138){$v_{l_0}$}
\end{picture}
\caption{A subgraph yielding a type \emph{III} generator.}
\label{fig: third type of generator illustration}
\label{fig:figure2}
\end{minipage}
\end{figure}

A \emph{type III generator} is specified by the choice of $n'\geq 2$ distinct vertices, $v_{l_1},\dots, v_{l_{n'}}\in V_\G$,  
plus $2$ additional vertices $v_{l_0}$ and $v_{l_{n'+1}}$ such that the edges $\set{v_{l_0},v_{l_k}}$ and
$\{v_{l_{n'+1}},v_{l_k}\}$ exist, for all $1\leq k \leq n'$ and, furthermore, by the choice of positive integers
$1\leq d_k\leq q-2$ such that $d_1+\cdots + d_{n'} = q-1$. We associate to this data the binomial 
$\prod_{k=1}^{n'} t_{i_k}^{d_k}-\prod_{k=1}^{n'} t_{j_k}^{d_k}$, 
where, for each $k\in \set{1,\dots,n'}$,
$i_k$ and $j_k$ are such that $e_{i_k}=\set{v_{l_0},v_{l_k}}$ and $e_{j_k}=\{v_{l_{n'+1}},v_{l_k}\}$.
The associated weighted subgraph of $\G$ is depicted in Figure~\ref{fig: third type of generator illustration}.
By a straightforward application of Lemma~\ref{lemma: Vanishing lemma}, it is clear that 
the homogeneous binomials of the $3$ types listed earlier
belong to $I(X)$. 
\smallskip 

We will need the following lemma.

\begin{lemma}\label{lemma: generalised type III}
Let $v_{l_0}, v_{l_1}, \dots, v_{l_{n'}}, v_{l_{n'+1}}\in V_\G$,  
be such that $n' \geq 2$ and the edges $\set{v_{l_0},v_{l_k}}$ and
$\{v_{l_{n'+1}},v_{l_k}\}$ exist in $\G$, for all $1\leq k \leq n'$. Let 
$1\leq d_k\leq q-2$ be such that $d_1+\cdots + d_{n'} = \alpha(q-1)$, where $\alpha$ is a positive integer. 
Consider $f=\prod_{k=1}^{n'} t_{i_k}^{d_k}-\prod_{k=1}^{n'} t_{j_k}^{d_k}$, 
where, for each $k\in \set{1,\dots,n'}$,
$i_k$ and $j_k$ are such that $e_{i_k}=\set{v_{l_0},v_{l_k}}$ and $e_{j_k}=\{v_{l_{n'+1}},v_{l_k}\}$. Then $f$
belongs to the ideal generated by the type III binomials.
\end{lemma}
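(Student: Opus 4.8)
The plan is to prove Lemma~\ref{lemma: generalised type III} by induction on the integer $\alpha\geq 1$. The base case $\alpha=1$ is precisely the definition of a type III generator (the weights satisfy $d_1+\cdots+d_{n'}=q-1$ with $1\le d_k\le q-2$), so $f$ is itself a type III binomial and there is nothing to prove. For the inductive step, assume $\alpha\geq 2$ and that the result holds for all smaller positive values of the parameter.

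First I would exploit the fact that $\sum_{k=1}^{n'}d_k=\alpha(q-1)$ is strictly larger than $q-1$ to peel off a type III binomial. Concretely, I want to choose a sub-collection of indices together with replacement exponents $1\le d_k'\le d_k$ (with $d_k'\le q-2$) so that $\sum d_k'=q-1$; a greedy argument works, since each $d_k\le q-2<q-1$ and the total exceeds $q-1$, one can accumulate exponents from $d_1,d_2,\dots$ until the running sum first reaches or exceeds $q-1$, then trim the last one down so the sum is exactly $q-1$. Writing $c_k=d_k-d_k'\ge 0$ for the leftover exponents, we have $\sum c_k=(\alpha-1)(q-1)$. This gives a factorization of the "black" monomial as $\prod t_{i_k}^{d_k}=\bigl(\prod t_{i_k}^{d_k'}\bigr)\bigl(\prod t_{i_k}^{c_k}\bigr)$ and similarly for the "dotted" monomial.

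The key algebraic identity is the telescoping
\[
\prod_{k} t_{i_k}^{d_k}-\prod_{k} t_{j_k}^{d_k}
=\Bigl(\prod_{k} t_{i_k}^{d_k'}-\prod_{k} t_{j_k}^{d_k'}\Bigr)\prod_{k} t_{i_k}^{c_k}
+\Bigl(\prod_{k} t_{i_k}^{c_k}-\prod_{k} t_{j_k}^{c_k}\Bigr)\prod_{k} t_{j_k}^{d_k'}.
\]
The first summand is a multiple of a genuine type III binomial (built from the vertices $v_{l_0},v_{l_{n'+1}}$, those $v_{l_k}$ with $d_k'\geq 1$ — which is all of them, or at least at least two of them so $n'$ is not a problem — and exponents $d_k'$). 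The subtlety to watch: the definition of type III requires at least two "spoke" vertices, i.e.\ at least two indices with positive exponent; since every $d_k'\ge 1$ by construction, and $n'\ge 2$, this is automatic. For the second summand, the binomial $\prod t_{i_k}^{c_k}-\prod t_{j_k}^{c_k}$ has exponents summing to $(\alpha-1)(q-1)$, but some $c_k$ may be zero — one simply discards those indices, obtaining a binomial of the same shape on a possibly smaller spoke set $\{v_{l_k}:c_k\ge 1\}$, which still has the required structure provided this set has at least two elements. By the inductive hypothesis (with parameter $\alpha-1$), once we know the reduced spoke set has size $\ge 2$, this binomial lies in the ideal generated by type III binomials, hence so does the whole second summand.

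The main obstacle I anticipate is the bookkeeping around degenerate cases of the reduced spoke set: after trimming, the set $\{k:c_k\ge1\}$ could in principle have size $0$ or $1$. Size $0$ means all $c_k=0$, forcing $(\alpha-1)(q-1)=0$, i.e.\ $\alpha=1$, contradicting $\alpha\ge2$. Size $1$ means a single leftover exponent $c_{k_0}=(\alpha-1)(q-1)$, which violates $c_{k_0}\le d_{k_0}\le q-2<(\alpha-1)(q-1)$ whenever $\alpha\ge2$; so this too cannot happen. Thus both degeneracies are excluded and the induction closes. I would spell out these two exclusions explicitly, as they are the only non-formal points, and then conclude that $f$ is an $S$-linear combination of type III binomials.
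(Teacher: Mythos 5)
Your proof is correct and follows essentially the same route as the paper: induction on $\alpha$, splitting off a type III generator with exponents $d_k'$ summing to $q-1$, and using the same telescoping identity $f=\bigl(\prod t_{i_k}^{d_k'}-\prod t_{j_k}^{d_k'}\bigr)\prod t_{i_k}^{c_k}+\bigl(\prod t_{i_k}^{c_k}-\prod t_{j_k}^{c_k}\bigr)\prod t_{j_k}^{d_k'}$ to invoke the inductive hypothesis on the leftover binomial. Your explicit exclusion of the degenerate spoke sets (size $0$ or $1$ after trimming) is a point the paper leaves implicit, but it changes nothing in substance.
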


\begin{proof}
We proceed by induction on $\alpha$.
If $\alpha=1$, $f$ is exactly a type III generator. Assume $\alpha \geq 2$.
Choose $m\in \NN$ such that  
$2\leq m\leq n'$ and, for each $k=1,\dots,m$, choose $d'_k\in \NN$ 
such that $1\leq d'_k\leq d_k \leq q-2$ and $d'_1+\dots+d'_{m}=q-1$. 
Then $\prod_{k=1}^{m} t_{i_k}^{d'_k}-\prod_{k=1}^{m} t_{j_k}^{d'_k}$ is a type III generator. 
Write $f=t^a\prod_{k=1}^{m} t_{i_k}^{d'_k}-t^b\prod_{k=1}^{m} t_{j_k}^{d'_k}$, for appropriate
$a,b\in\NN^s$. Then:
\[
f= \textstyle \prod_{k=1}^{n'} t_{i_k}^{d_k}-\prod_{k=1}^{n'} t_{j_k}^{d_k} = 
 \left ( \prod_{k=1}^{m} t_{i_k}^{d'_k}-\prod_{k=1}^{m} t_{j_k}^{d'_k}\right )t^a + (t^a-t^b)\prod_{k=1}^{m} t_{j_k}^{d'_k}.
\]
By induction, $t^a-t^b$ is in the ideal generated by the type III binomials, hence so is $f$.
\end{proof}

\begin{lemma}\label{lemma: useful lemma}
Let $f=t^a-t^b$ be a (homogeneous) binomial in $I(X)$. If $\H_f$ contains one of the subgraphs depicted in 
Figure~\ref{fig: two dichromatic vertex arrangements}, with $v_1\not = v_2$, and either there is an edge in $\G$ 
through $v_1$ and $v_2$ 
or one of $v_1$, $v_2$ is dichromatic,
then there exists $j\in \set{1,\dots,s}$ and a (homogeneous) binomial $g\in K[t_1,\dots,t_s]$ such 
that $f-t_jg$ belongs to the ideal of $K[t_1,\dots,t_s]$ generated by the binomials 
of type \mbox{I,II and III.}
\begin{small}
\begin{figure}[h]
\begin{picture}(300,50)(1,75)
\put(19,110){$\bullet$}
\put(10,107){$v_1$}
\put(22,89){$e_i$}
\put(22,112){\line(2,-3){20}}
\put(40,80){$\bullet$}
\put(38,72){$v_{{4}}$}
\multiput(43,80)(3,0){18}{$\cdot$}
\put(65,72){$e_j$}
\put(92,80){$\bullet$}
\put(90,72){$v_{{3}}$}
\put(113,110){$\bullet$}
\put(115,112){\line(-2,-3){20}}
\put(119,107){$v_2$}
\put(108,89){$e_k$}
\put(179,110){$\bullet$}
\put(170,107){$v_1$}
\put(182,89){$e_j$}
\put(198,72){$v_{{4}}$}
\multiput(181,110)(2,-3){10}{$\cdot$}
\put(200,80){$\bullet$}
\put(203,83){\line(1,0){50}}
\put(225,72){$e_i$}
\put(252,80){$\bullet$}
\put(250,72){$v_{{3}}$}
\put(273,110){$\bullet$}
\multiput(274,110)(-2,-3){10}{$\cdot$}
\put(279,107){$v_2$}
\put(268,89){$e_l$}
\end{picture}
\caption{Two special dichromatic edge arrangements}
\label{fig: two dichromatic vertex arrangements}
\end{figure}
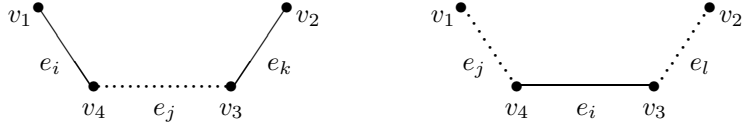
\end{small}
\end{lemma}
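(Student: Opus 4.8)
The plan is to analyze the two configurations separately, using the combinatorial freedom built into the weighted subgraph $\H_f$ together with Lemma~\ref{lemma: Vanishing lemma} and Lemma~\ref{lemma: generalised type III}. Consider first the left-hand configuration: a black edge $e_i=\{v_1,v_4\}$ and a black edge $e_k=\{v_2,v_3\}$, together with a dotted edge $e_j=\{v_3,v_4\}$ of $\H_f$. The idea is to find a black edge $e_l$ incident to $v_1$ that we can "peel off'' from $t^a$. More precisely, let $e_l=\{v_1,u\}$ be any black edge of $\H_f$ other than $e_i$; if $v_1$ is dichromatic then certainly such an $e_l$ exists (indeed a dotted edge through $v_1$ also helps, and we will need to handle whether we peel from $t^a$ or $t^b$). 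If there is an edge of $\G$ through $v_1$ and $v_2$, we instead use the variable $t_{\{v_1,v_2\}}$ as a "bookkeeping'' edge to form a short relation of type II or type III. The first step will be to write down, in each subcase, an explicit binomial of one of the three types whose leading monomial shares a factor $t_j$ (for a suitable $j$) with $t^a$ or $t^b$, so that subtracting a multiple of it from $f$ cancels that factor and produces $f - t_j g$ lying in the type-I,II,III ideal; the resulting $g$ is again a binomial in $I(X)$ by Lemma~\ref{lemma: Vanishing lemma} applied to the weighted-degree congruences, which are preserved by adding a binomial of $I(X)$.

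Concretely, for the left configuration I would argue as follows. Since $\{v_3,v_4\}$ is a dotted edge, $\wt_{\H_{t^b}}(v_4)\geq \wt_{\H_f}(e_j)\geq 1$, so by Lemma~\ref{lemma: Vanishing lemma} $\wt_{\H_{t^a}}(v_4)\equiv \wt_{\H_{t^b}}(v_4)\pmod{q-1}$ forces either $\wt_{\H_{t^a}}(v_4)\geq q-1$ or $\wt_{\H_{t^a}}(v_4)=0$; but $e_i=\{v_1,v_4\}$ is black, so $\wt_{\H_{t^a}}(v_4)\geq 1$, hence $\wt_{\H_{t^a}}(v_4)\geq q-1$. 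Thus there are black edges through $v_4$ carrying total weight at least $q-1$. Together with the black edge $e_k=\{v_2,v_3\}$ and the dotted edge $e_j=\{v_3,v_4\}$, together with either the edge $\{v_1,v_2\}$ of $\G$ or the extra edges afforded by a dichromatic $v_1$ or $v_2$, one assembles a "book'' of the shape of Figure~\ref{fig: third type of generator illustration}, possibly with total weight a multiple $\alpha(q-1)$; Lemma~\ref{lemma: generalised type III} then says the corresponding binomial is in the type-III ideal. The right-hand configuration is handled symmetrically, swapping the roles of $t^a$ and $t^b$ (or black and dotted) and using $e_l=\{v_2,\cdot\}$ in place of $e_i$. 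In both cases one isolates a single variable $t_j$ dividing the appropriate monomial of the assembled type-III binomial, and sets $g$ to be $f/t_j$-like remainder after subtraction.

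The main obstacle I anticipate is the bookkeeping needed to guarantee that one can always extract a factor $t_j$ common to $f$ and to some binomial of type I, II, or III built out of the available edges — in other words, verifying that the weighted-degree congruences from Lemma~\ref{lemma: Vanishing lemma}, combined with the hypothesis (an edge through $v_1,v_2$, or dichromaticity of one of them), leave enough room to realize the Figure~\ref{fig: third type of generator illustration} pattern with weights summing to a multiple of $q-1$. In particular the degenerate cases — when the weight of some black edge through $v_4$ already equals $q-1$, or when $v_1$ and $v_2$ are joined by an edge that is itself already present in $\H_f$ with some color — need separate, short arguments; and one must make sure the binomial $g$ produced is homogeneous, which follows since $f$ is homogeneous and the subtracted generator is. I expect each subcase reduces to choosing the exponents $d_k$ as in Lemma~\ref{lemma: generalised type III} and then a one-line algebraic identity of the form
\[
f = \bigl(\text{type generator}\bigr)\cdot t^{c} + t_j\,g,
\]
analogous to the identity displayed in the proof of Lemma~\ref{lemma: generalised type III}.
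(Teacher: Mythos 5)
There is a genuine gap. The pivotal deduction in your treatment of the left-hand configuration --- that since $e_j=\set{v_3,v_4}$ is dotted and $e_i=\set{v_1,v_4}$ is black, Lemma~\ref{lemma: Vanishing lemma} forces $\wt_{\H_{t^a}}(v_4)\geq q-1$ --- is false. That lemma only gives the congruence $\wt_{\H_{t^a}}(v_4)\equiv \wt_{\H_{t^b}}(v_4)\pmod{q-1}$; both weighted degrees may be positive and equal (say both equal to $2$), so nothing forces either of them to be $0$ or at least $q-1$. Consequently your plan of assembling a configuration as in Figure~\ref{fig: third type of generator illustration} with weights summing to a multiple of $q-1$, so that Lemma~\ref{lemma: generalised type III} applies, has no basis: in this purely local situation there is no reason for any collection of edges at $v_4$ (or elsewhere) to carry total weight $\geq q-1$, and in fact type III generators are not needed for this lemma at all. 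Moreover, the genuinely hard case --- no edge of $\G$ between $v_1$ and $v_2$ (so they lie in the same part) and one of them merely dichromatic --- is exactly where your proposal remains at the level of intentions (``we will need to handle whether we peel from $t^a$ or $t^b$'', ``one assembles a book'') without producing the required identity; that case is where the content of the proof lies.

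For comparison, the paper's argument uses only type II generators and short identities. If $e_l=\set{v_1,v_2}\in E(\G)$ (left configuration), then $t_it_k-t_jt_l$ is a type II generator, and writing $f=t_it_kt^{a'}-t_jt^{b'}$ and $g=t_lt^{a'}-t^{b'}$ one gets $f-t_jg=(t_it_k-t_jt_l)t^{a'}$. If instead $v_1,v_2$ are in the same part and $v_1$ is dichromatic with a dotted edge $e_l=\set{v_1,v_5}$, completeness of the multipartite structure provides edges $e_\lambda=\set{v_1,v_3}$ and $e_\mu=\set{v_2,v_4}$ of $\G$; one replaces $f$ by $f'=t_\lambda t_\mu t^{a'}-t^{b}$, noting $f=f'+(t_it_k-t_\lambda t_\mu)t^{a'}$ with $t_it_k-t_\lambda t_\mu$ of type II, and checks that $\H_{f'}$ contains the right-hand configuration of Figure~\ref{fig: two dichromatic vertex arrangements} together with an edge of $\G$ through the relevant pair (the degenerate possibilities $v_3=v_5$ or $\set{v_3,v_5}\in E(\G)$ reduce directly to the first case). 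Your proposal would need to be rebuilt along these lines: as it stands, the weight estimate it rests on is incorrect and the decisive case is not argued.
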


\begin{proof}
Let $J$ be the ideal of $K[t_1,\dots,t_s]$ generated by the binomials 
of type I, II and III.

Case 1. 
Suppose, without loss of generality, that it is the case of the graph on the left of Figure~\ref{fig: two 
dichromatic vertex arrangements} and that there exists an edge in $\G$ through $v_1$ and $v_2$. 
Denote this edge by $e_l$. Then $t_it_k-t_jt_l$ is a type II generator.
Let $f=t^a-t^b = t_it_kt^{a'}-t_jt^{b'}$, for appropriate $a',b'\in \NN^s$ and consider $g=t_lt^{a'}-t^{b'}$. Then,
\[
f-t_jg = t_it_kt^{a'}-t_jt^{b'} - t_j(t_lt^{a'}-t^{b'}) = (t_it_k-t_jt_l)t^{a'} \in J. 
\]
Case 2. Assume now that $v_1$ and $v_2$ have no edge in $\G$ between them, (in other words, that they belong 
to the same part of the partition of $V_\G$) and that $v_1$ is dichromatic. Let $e_l$ be a dotted edge incident 
to this vertex. Let $v_5$ be its other endpoint. If there exists an edge in $\G$ between $v_3$ and $v_5$, 
we reduce to Case 1 (graph on the right of Figure~\ref{fig: two dichromatic vertex arrangements}). 
Assume $v_3$ and $v_5$ belong to the same part of the partition of $V_\G$. 
Observe that $v_1$ and $v_3$ must be in distinct parts of the partition of $V_\G$ 
(otherwise $v_2$ and $v_3$ would be in that same part, but we are assuming that there is an edge $e_k$ through 
$v_2$ and $v_3$). Therefore there is an edge in $\G$, denote it by $e_\lambda$, through $v_1$ and $v_3$. 
By the same type of reasoning, we deduce that there is an edge in $\G$, denote it by $e_{\mu}$, through $v_2$ and $v_4$.
If $v_3=v_5$, then $e_\lambda=e_l$, which is a dotted edge in $\H_f$.
Since $v_2$ and $v_4$ are in different parts of the partition of $V_\G$, we are reduced (with $e_i, e_l$ and $e_k$)
to Case 1 (graph on the left of Figure~\ref{fig: two dichromatic vertex arrangements}).
So we may assume $v_3 \neq v_5$.
Write $f=t_it_kt^{a'}-t^b$ and consider $f'=t_{\lambda}t_{\mu}t^{a'}-t^{b}$. 
Notice that $e_\lambda$ is a black edge of the subgraph $\H_{f'}$, and 
since $t_l$ and $t_j$ divide $t^b$, $e_l$ and $e_j$ are dotted edges of the subgraph $\H_{f'}$. 
Observe also that $v_4$ and $v_5$ are not in the same part of the partition of $V_\G$ 
(otherwise $v_3$ and $v_4$ would be in that same part, but we are assuming that there is an edge $e_j$ through 
$v_3$ and $v_4$). Then there
must be an edge in $\G$ through $v_4$ and $v_5$.
Therefore, $\H_{f'}$ contains a subgraph with edges $e_l, e_\lambda$ and $e_j$, satisfying the assumption of Case 1
(graph on the right of Figure~\ref{fig: two dichromatic vertex arrangements}). 
Thus, there exists 
$g \in K[t_1,\dots,t_s]$ such that $f'-t_\lambda g$ is a multiple of a type II generator. To complete the proof 
it suffices to observe that $f = f' + (t_it_k-t_\lambda t_\mu)t^{a'}$ 
and that $t_it_k-t_\lambda t_\mu$ is a type II generator.
\end{proof}

\begin{theorem}\label{thm: generators}
The binomials of type I, II and III generate $I(X)$. 
\end{theorem}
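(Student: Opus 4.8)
The plan is to show that $I(X)$ is contained in the ideal $J$ generated by the type I, II and III binomials; the reverse inclusion $J\subseteq I(X)$ was already observed via Lemma~\ref{lemma: Vanishing lemma}. Since $I(X)$ is a lattice ideal, it is generated by binomials, so it suffices to take an arbitrary homogeneous binomial $f=t^a-t^b\in I(X)$ with $\supp a\cap \supp b=\emptyset$ and prove $f\in J$. First I would reduce modulo the type~I generators: using $t_i^{q-1}\equiv t_j^{q-1}$ one can assume that in $\H_{t^a}$ (and in $\H_{t^b}$) every edge has weight at most $q-2$, and moreover that the total degree has been made as small as possible in its class. I would then argue by induction on $\deg f$, the base cases ($\deg f\le 2$, or $f$ a monomial multiple of a smaller binomial) being handled directly; the type~I reduction covers binomials supported on a single edge pair.

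The heart of the argument is a combinatorial analysis of the two-colored weighted graph $\H_f$. By Lemma~\ref{lemma: Vanishing lemma}, at every vertex $v$ the black weighted degree and the dotted weighted degree of $v$ agree mod $q-1$. I would distinguish cases according to the structure of $\H_f$. If some variable $t_j$ divides both a monomial appearing in $f$ and (after a type~II swap) can be factored out, Lemma~\ref{lemma: useful lemma} lets me peel off a type~II multiple and replace $f$ by $f-t_jg$, strictly decreasing the ``complexity'' (e.g. the number of edges, or a lexicographic measure on degrees) so that induction on the binomial $g$ applies. The key geometric inputs are: (i) in a complete multipartite graph any two vertices in different parts are joined by an edge, so ``missing'' edges only occur inside a part; (ii) a dichromatic vertex (one incident to both a black and a dotted edge) together with the multipartite structure forces the existence of enough edges in $\G$ to invoke one of the two configurations of Figure~\ref{fig: two dichromatic vertex arrangements}. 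When no such reduction is available, $\H_{t^a}$ and $\H_{t^b}$ must each be disjoint unions of ``stars'': each connected component is a star $K_{1,n'}$ centered at a vertex $v_{l_0}$ (for the black part) resp. $v_{l_{n'+1}}$ (for the dotted part) on the \emph{same} leaf set, and the congruence condition at the leaves forces the black and dotted weights at each leaf to be equal, while the condition at the centers forces the leaf-weights to sum to a multiple of $q-1$. That is precisely the situation of Lemma~\ref{lemma: generalised type III}, which gives $f\in J$.

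Concretely, the induction runs as follows. Given $f=t^a-t^b\in I(X)$ minimal-degree in its class: if $\deg f\le 2$ or $f$ is divisible by a variable, conclude by a trivial case or by induction after factoring. Otherwise, if $\H_f$ contains a dichromatic vertex, I would show it produces one of the arrangements of Figure~\ref{fig: two dichromatic vertex arrangements} (using the complete multipartite structure to supply the needed edge of $\G$ or the needed dichromatic companion vertex), apply Lemma~\ref{lemma: useful lemma} to write $f-t_jg\in J$ with $g$ a homogeneous binomial in $I(X)$ of smaller complexity, and finish by induction on $g$. If $\H_f$ has no dichromatic vertex, then no vertex carries both colors, so $\H_{t^a}$ and $\H_{t^b}$ are vertex-disjoint except that the congruence at each vertex $v$ forces $\wt_{\H_{t^a}}(v)\equiv 0$ and $\wt_{\H_{t^b}}(v)\equiv 0\pmod{q-1}$; a short argument (degree minimality plus $d_k\le q-2$ on each edge) then shows each component of $\H_{t^a}$ is a star and similarly for $\H_{t^b}$, that the two star-leaf-sets coincide component by component, and that the matching leaf-weights agree — so Lemma~\ref{lemma: generalised type III} applies and $f\in J$.

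The main obstacle I expect is the bookkeeping in the no-dichromatic-vertex case: proving that minimality of $\deg f$ (after the type~I reduction) really does force each monochromatic piece to be a disjoint union of stars on a common leaf set, rather than some more complicated bipartite-type configuration, and handling the possibility that the two stars in a component are centered at vertices lying in the same part (so that no edge of $\G$ joins the two centers) — one must check that the weight constraints still line up so that Lemma~\ref{lemma: generalised type III} is literally applicable, or else find a further type~II reduction. Getting a clean, strictly-decreasing complexity measure that simultaneously accommodates the type~II peeling of Lemma~\ref{lemma: useful lemma} and the type~I normalization will be the delicate point; everything else is a finite case check driven by the two figures.
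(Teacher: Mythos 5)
Your overall strategy (reduce to binomials with exponents at most $q-2$ via type I, then induct, using Lemma~\ref{lemma: useful lemma} to peel off type II multiples and Lemma~\ref{lemma: generalised type III} for the residual configurations) points in the right direction, but the case split you propose has a genuine gap. In your first branch you assume that the mere presence of a dichromatic vertex, together with the multipartite structure, always produces one of the two arrangements of Figure~\ref{fig: two dichromatic vertex arrangements} \emph{with the extra hypothesis of Lemma~\ref{lemma: useful lemma} satisfied}, namely that the endpoints $v_1,v_2$ of the alternating three-edge path are either joined by an edge of $\G$ or one of them is dichromatic. This is not automatic: the problematic situation is precisely when a monochromatic vertex is present and the relevant endpoints are monochromatic vertices lying in a common part of the partition. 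The paper's proof devotes most of its length to exactly this case: it first handles the ``special condition'' (a second vertex adjacent in $\G$ to all neighbours of the monochromatic vertex $v_{l_0}$), transferring the whole weight of $v_{l_0}$ by Lemma~\ref{lemma: generalised type III} --- a step that keeps the degree fixed and only decreases the number of positive-degree vertices, which is why the induction there runs on $\#V^+_{\H_f}+\deg(f)$ rather than on $\deg(f)$ alone --- and then, when that fails, sums the congruences of Lemma~\ref{lemma: Vanishing lemma} over all vertices other than $v_{l_0}$ and uses homogeneity to exhibit a vertex $v_{\lambda_0}$ whose dotted weight is at least $q-1$, from which a type III generator can be peeled off. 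None of this machinery appears in your outline, and without it the induction does not close.

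Your second branch (no dichromatic vertex at all) is also internally inconsistent as stated: if no vertex carries both colours, then the positive-degree vertex sets of $\H_{t^a}$ and $\H_{t^b}$ are disjoint, so the two ``stars'' can never share a leaf set, and Lemma~\ref{lemma: generalised type III} cannot be ``literally applicable'' to $f$. Moreover the claimed star structure is false in general: for $q=3$ take $t^a$ supported on a black triangle and $t^b$ on a vertex-disjoint dotted triangle, all weights $1$; every vertex has weighted degree $0$ or $2\equiv 0\pmod{2}$ in each colour, so $f=t^a-t^b\in I(X)$, has no dichromatic vertex, and neither side is a star. Handling such binomials requires further type II/III manipulations that your sketch does not supply. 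So, while the ingredients you name are the right ones, the decisive combinatorial argument --- the treatment of monochromatic vertices via the special condition, the congruence-summation producing a heavy dotted vertex, and the resulting type III reduction --- is missing, and the proposal as it stands does not prove the theorem.
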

\begin{proof}
Denote by $J$ the ideal of $K[t_1,\dots,t_s]$ generated by the binomials of type I, II and III.
Clearly $J \subseteq I(X)$.
By \cite[Theorem 4.5]{evencycles} there exists a set of generators of $I(X)$ consisting of the generators of type I, 
plus a finite set of homogeneous binomials $t^a-t^b$ with $\supp(a)\cap \supp(b)=\emptyset$ and such that
the degree of $t^a-t^b$ in each variable is $\leq q-2$. Hence it will suffice to show that any homogeneous 
binomial $f=t^a-t^b\in I(X)$ satisfying the latter conditions belongs to the ideal generated by the elements 
in the $3$ classes described, or, equivalently, is congruent to $0$ modulo $J$.
Let $f=t^a-t^b$ be such a binomial and, as defined above, let $\H_f$ be the induced subgraph of $\G$. 
We will argue by induction on $\# V^+_{\H_f} + \deg(f)$, where $V^+_{\H_f}$ denotes the subset of $V_{\H_f}=V_\G$
of vertices with positive degree. 
\medskip

By Lemma~\ref{lemma: Vanishing lemma}, no vertex of $\H_f$ has (standard) degree equal to $1$. Hence $\# V^+_{\H_f}\geq 3$ and if
$\# V^+_{\H_f}=3$ then $\H_f$ reduces to a triangle. This situation is impossible, since $f$ is homogeneous and 
the condition given in the statement of Lemma~\ref{lemma: Vanishing lemma}
must be satisfied. The base case is thus 
$\# V^+_{\H_f}=4$ and $\deg(f)=2$. $\H_f$ must reduce to a square, and using Lemma~\ref{lemma: Vanishing lemma} 
we deduce that $f$ is a type II generator if $q>3$ and a type II or a type III generator if $q=3$.
\medskip

Suppose that $\# V^+_{\H_f} + \deg(f) \geq 7$. 
If all vertices of $\H_f$ are dichromatic then we can find a subgraph of $\H_f$ satisfying the assumptions of
Lemma~\ref{lemma: useful lemma}. Then there exists $j\in \set{1,\dots,s}$ such that $f-t_jg \in J$, for some homogeneous 
binomial $g\in K[t_1,\dots,t_s]$. 
Since $J\subset I(X)$, $f\in I(X)$ and $t_j$ does not vanish on any point of $X$, 
we deduce that $g\in I(X)$. It is clear that $\deg(g)<\deg(f)$. We may assume that 
$g=t^{a'}-t^{b'}$ with $\supp(a')\cap \supp(b') = \emptyset$ (dividing through by an appropriate monomial,
if necessary). 
We may also assume that $0\leq a'_i,b'_i\leq q-2$\;: suppose that $t_l^{a'_l}$ divides $t^{a'}$ and
$a'_l \geq q-1$; let $t_k$ divide $t^{b'}$; then
$g=t_l^{q-1}t^{a''}-t_kt^{b''}+t_k^{q-1}t^{a''}-t_k^{q-1}t^{a''}=
(t_l^{q-1}-t_k^{q-1})t^{a''}+t_kh$, where $h=t_l^{a'_l-(q-1)}t^c-t^d$, for appropriate $a'',b'',c,d\in \NN^s$;
since $t_l^{q-1}-t_k^{q-1}$ is a type I generator, $g \in I(X)$ if and only if $h \in I(X)$. 
Repeating the argument, we may indeed assume that $0\leq a'_i,b'_i\leq q-2$.
Therefore, by the induction assumption,
$g\equiv 0 \mod{J}$, which implies that $f\equiv 0 \mod{J}$.
\smallskip

Let now $v_{l_0}$ be a monochromatic vertex of $\H_f$. Assume, without loss of generality, that all of its
incident edges are black. Denote by $v_{l_1},\dots,v_{l_{n'}} \;(n' \geq2)$ their endpoints and let 
$e_{i_k}=\set{v_{l_0},v_{l_k}}$, for $k=1,\dots,n'$.

Special condition: Suppose there exists
$v_{l_{n'+1}}\in V^+_{\H_f}$ with $v_{l_{n'+1}}\not = v_{l_0}$ and such that, for all $k=1,\dots,n'$, 
\mbox{$\{v_{l_{n'+1}},v_{l_k}\}\in E_\G$}.
Denote these edges by $e_{j_k}$.  By Lemma~\ref{lemma: Vanishing lemma}, $a_{i_1}+\dots +a_{i_{n'}}\equiv 0 \pmod{q-1}$ 
and therefore, by Lemma~\ref{lemma: generalised type III},
$\prod_{k=1}^{n'} t_{i_k}^{a_{i_k}} - \prod_{k=1}^{n'} t_{j_k}^{a_{i_k}} \in J$. Writing
$t^a = t^{a'}\prod_{k=1}^{n'} t_{i_k}^{a_{i_k}}$, for suitable $a'\in \NN^s$,
\[
\textstyle f= t^{a'}\prod_{k=1}^{n'} t_{i_k}^{a_{i_k}} -t^b \equiv t^{a'}\prod_{k=1}^{n'} t_{j_k}^{a_{i_k}} -t^b \mod J.
\]
Write $g=f-t^{a'}(\prod_{k=1}^{n'} t_{i_k}^{a_{i_k}} - \prod_{k=1}^{n'} t_{j_k}^{a_{i_k}})=
t^{a'}\prod_{k=1}^{n'} t_{j_k}^{a_{i_k}} -t^b=t^c-t^b$, for appropriate $c\in \NN^s$. 
Then $g \in I(X)$, the graph induced by $g$ has one vertex of positive weighted degree 
fewer than the graph $\H_f$, and $g$ has degree equal to $\deg f$.
As above, by using a type I generator and dividing through by 
an appropriate monomial (in which case the resulting monomial would have degree strictly less that $\deg f$), we 
may also assume that no variable in $g$ occurs to a higher power than $q-2$ and that 
$\supp(c)\cap \supp(b)=\emptyset$.
 By induction, we deduce that 
\mbox{$f\equiv g \equiv 0\mod J$.}
\smallskip

Denote now by $P_{\mu_0},P_{\mu_1},\dots,P_{\mu_m}$  
the parts of the partition of $V_\G$ that have nonempty intersection with $V^+_{\H_f}$, with $v_{l_0}\in P_{\mu_0}$. 
If $\# (P_{\mu_0}\cap V^+_{\H_f}) \geq 2$, we are in the special condition case, and therefore $f \equiv 0\mod J$.
We may assume that $P_{\mu_0}\cap V^+_{\H_f}=\set{v_{l_0}}$. 
We may also assume that $P_{\mu_k}\cap \set{v_{l_1},\dots, v_{l_{n'}}}\not = \emptyset$, 
for all \mbox{$k=1,\dots,m$}, for otherwise, if for some $k \in \{1, \ldots, m\}$, this intersection is empty, 
choosing $v_{l_{n'+1}} \in P_{\mu_k}\cap V_{\H^+_f}$,
we are again in the special condition case.
Another observation is the following: if there exists $k \in \{1, \ldots, m\}$ such that 
$\# (P_{\mu_k}\cap V_{\H^+_f}) \geq 2$ and one of the vertices of $P_{\mu_k}\cap V_{\H^+_f}$ is monochromatic, 
then, using the same argument we used for $P_{\mu_0}\cap V_{\H^+_f}$, we conclude that $f \equiv 0\mod J$. 
We may therefore assume that for all $k \in \{1, \ldots, m\}$ such that 
$\# (P_{\mu_k}\cap V_{\H^+_f}) \geq 2$, the vertices of $P_{\mu_k}\cap V_{\H^+_f}$ are all dichromatic.

Applying Lemma~\ref{lemma: Vanishing lemma} for each vertex in $V^+_{\H_f}\setminus \set{v_{l_0}}$ and 
summing all the congruences obtained,
\begin{equation}\label{eq: sum of congruences}
\textstyle \sum_{k=1}^{n'} a_{i_k} + 2\sum_{i^*} a_{i^*} = 2\sum_{i^*} b_{i^*} +\delta (q-1),
\end{equation}
where $i^*$ varies on the subset of $\set{1,\dots,s}$ corresponding to edges of $\H_f$ except for 
$e_{i_1},\dots, e_{i_{n'}}$, and $\delta \in \ZZ$ 
is given by $\delta= \sum_{v \in V^+_{\H_f}\setminus \set{v_{l_0}}} \delta_v$,
where $\delta_v \in \ZZ$ yields the congruence in of Lemma~\ref{lemma: Vanishing lemma} for the vertex $v$.
Since $f$ is homogeneous,
\begin{equation}\label{eq: homogeneity}
\textstyle \sum_{k=1}^{n'} a_{i_k} + \sum_{i^*} a_{i^*} = \sum_{i^*} b_{i^*}.
\end{equation}
Together, (\ref{eq: sum of congruences}) and (\ref{eq: homogeneity}) imply that 
$\textstyle \sum_{k=1}^{n'} a_{i_k} + \delta(q-1) = 0$.
We deduce that $\delta <0$. This means that for some vertex 
$v_{\lambda_0} \in V^+_{\H_f} \cap \bigl(P_{\mu_1} \cup \cdots \cup P_{\mu_m} \bigr)$, 
$\delta_{\lambda_0}<0$, and, in particular,
the sum of  the weights of dotted edges incident to $v_{\lambda_0}$ is $\geq q-1$. 
Denote by $e_{\nu_1},\dots,e_{\nu_{\tilde{n}}}$ the dotted edges incident to $v_{\lambda_0}$ and by 
$v_{\lambda_1},\dots,v_{\lambda_{\tilde{n}}}$ their endpoints, so that 
$e_{\nu_k}=\set{v_{\lambda_0},v_{\lambda_k}}$ and $\sum_{k=1}^{\tilde{n}} b_{\nu_k}\geq q-1$. 
Notice that, necessarily $\tilde{n}\geq 2$. We claim that $v_{\lambda_0}$ must be dichromatic. This is clear
if $v_{\lambda_0}$ coincides with one of $v_{l_1},\dots,v_{l_{n'}}$. Otherwise, since $v_{\lambda_0}$ 
belongs to some $P_{\mu_k}$ and $P_{\mu_k} \cap \set{v_{l_1},\dots,v_{l_{n'}}} \not = \emptyset$, 
we get $\#(P_{\mu_k}\cap V^+_{\H_f})\geq 2$ and hence $v_{\lambda_0}$ is not monochromatic.
The same argument can be used to show that $v_{\lambda_1},\dots,v_{\lambda_{\tilde{n}}}$ are all dichromatic. 
For each of the vertices $v_{\lambda_0},v_{\lambda_1},\dots,v_{\lambda_{\tilde{n}}}$ choose a black edge incident 
to it, denote it by $e_{\gamma_i}$, for $i=0,\dots,\tilde{n}$, and let $v_{\beta_i}$ be its endpoint.

Suppose there exists $r \in \{1, \ldots, \tilde{n}\}$ such that 
$v_{\beta_0} \not = v_{\beta_r}$. Either there is an edge in $\G$ through $v_{\beta_0}$ and
$v_{\beta_r}$, or $v_{\beta_0}$ and $v_{\beta_r}$ are in the same part of the partition of $V_\G$. Then there exists
$k \in \{1, \ldots, m\}$ such that $v_{\beta_0}, v_{\beta_r} \in P_{\mu_k}\cap V^+_{\H_f}$, and 
since $\#(P_{\mu_k}\cap V^+_{\H_f})\geq 2$, $v_{\beta_0}$ and $v_{\beta_r}$ are dichromatic. 
In any case, we may use Lemma~\ref{lemma: useful lemma} and argue as previously to show that $f\equiv 0 \mod J$. 
Suppose then that for all $r \in \{0, \ldots, \tilde{n}\}$, the endpoints 
$v_{\beta_r}$ all coincide.
As in the proof of Lemma~\ref{lemma: generalised type III}, 
choose $m'\in \NN$ such that  
$2\leq m'\leq \tilde{n}$ and, for each $k=1,\dots,m'$, choose $b'_{\nu_k}\in \NN$ 
such that $1\leq b'_{\nu_k}\leq b_{\nu_k} \leq q-2$ and $b'_{\nu_{1}}+\dots+b'_{\nu_{m'}}=q-1$. 
Then 
\[
\textstyle \prod_{k=1}^{m'} t_{\gamma_k}^{b'_{\nu_k}}-\prod_{k=1}^{m'} t_{\nu_k}^{b'_{\nu_k}} 
\]
is a type III generator, and therefore belongs to $J$.
Writing 
\[
\textstyle f=t^a-t^b = t^{a'}\prod_{k=1}^{m'}t_{\gamma_k}-t^{b'}\prod_{k=1}^{m'}t_{\nu_k}^{b'_{\nu_k}}, 
\]
for appropriate $a',b'\in \NN^s$, we deduce that 
\[
\textstyle f \equiv t^{a'}\prod_{k=1}^{m'}t_{\gamma_k}-t^{b'}\prod_{k=1}^{m'}t_{\gamma_k}^{b'_{\nu_k}}=
\bigl (t^{a'}-t^{b'}\prod_{k=1}^{m'} t_{\gamma_k}^{b'_{\nu_k}-1}\bigr )\prod_{k=1}^{m'}t_{\gamma_k} \mod J.
\]
The homogeneous binomial $g=t^{a'}-t^{b'}\prod_{k=1}^{m'} t_{\gamma_k}^{b'_{\nu_k}-1}=t^{a'}-t^{b''}$, 
which by the above belongs to $I(X)$, is such that $\deg(g)=\deg(f)-m' < \deg(f)$.
We may assume $\supp(a')\cap \supp(b'')=\emptyset$.
By induction, $g\equiv 0 \mod J$, which implies that $f\equiv 0 \mod J$ and completes the proof.
\end{proof}


\section{Regularity of $S/I(X)$}\label{sec: regularity}

Let $\G=\mathcal{K}_{\alpha_1,\dots,\alpha_r}$ be a complete multipartite graph with $r\geq 3$. We assume 
$\G$ does not coincide with $\mathcal{K}_{1,1,1}$, the complete graph on $3$ vertices. In that case, the 
associated toric set $X$ coincides with the ambient torus, $\TT^2\subset \PP^2$ and by \emph{cf.}~\cite[Lemma~1]{GRH},  
the regularity of $S/I(X)$ is $2(q-2)$.
In this section we will show that in all other cases,  
\begin{equation*}
\reg S/I(X)=\max \set{\alpha_1(q-2),\alpha_2(q-2),\dots,\alpha_r(q-2),\left \lceil (n-1)(q-2)/2 \right \rceil},
\end{equation*}
where, $n=\# V_\G = \alpha_1+\cdots +\alpha_r\geq 4$. If, without loss in generality, we assume that $\alpha_1\geq \alpha_2\geq \cdots \geq \alpha_r$,
then this formula takes on the simpler form:
\[
\reg S/I(X) = \max \set{\alpha_1(q-2),\left \lceil (n-1)(q-2)/2 \right \rceil}. 
\]
The case of the complete graph, \emph{i.e.}, when $\alpha_1=\cdots=\alpha_r=1$, is treated in \cite{GRS2}.
\smallskip

In the proof of this result we will need to show that $\reg S/I(X) \geq \left \lceil (n-1)(q-2)/2 \right \rceil$. This
inequality was shown to hold for any graph for which $\# X = (q-1)^{n-1}$ in \cite{GRS}. 
Indeed \cite[Corollary 3.13]{GRS} implies that if $X$ is the algebraic toric set associated 
to a $k$-uniform clutter on $n$ vertices then 
\[
\reg S/I(X)  \geq \left \lceil \frac{(\# X) (n-1)(q-2)}{k(q-1)^{n-1}}\right \rceil.
\]

In the proof of Theorem~\ref{thm: regularity} we argue on an Artinian reduction of $S/I(X)$. More precisely, if $f\in S$ 
is a regular element on $S/I(X)$ of degree $1$, the short exact sequence
\[
0\rt S/I(X) [-1] \stackrel{f}{\longrightarrow} S/I(X) \rt S/(I(X)+(f)) \rt 0 
\]
yields $\reg S/I(X) = \reg S/(I(X)+(f))-1$. Accordingly, showing that $\reg S/I(X) = d$ amounts to proving that 
every monomial of degree $d+1$ belongs to $I(X)+(f)$ and that there exists a monomial of degree $d$ that does not. 
For a detailed explanation of this fact, see \cite[Theorem 6.2]{evencycles}.
We begin with two lemmas. 

\begin{lemma}\label{lemma: moving the q-1 weight of a vertex}
Let $t^a\in S$ be a monomial and let $\H_{t^a}$ be the associated weighted subgraph of $\G$. Let 
$v_0\in V_\G$ and $\set{e_{i_k}=\set{v_0,v_{i_k}}:k=1,\dots,n'}$, $n' \geq 2$, be a 
subset of the edges of $\H_{t^a}$ incident to $v_0$, such
that $\sum_{k=1}^{n'} \wt_{\H_{t^a}}(e_{i_k})\geq \alpha(q-1)$, for some positive integer $\alpha$. 
Let $w_0\in V_\G\setminus\set{v_0}$ be such that $\set{w_0,v_{i_k}}\in E(\G)$, for all $k=1,\dots, n'$.
Then there exists a monomial $t^b\in S$ such that $\textstyle t^a - t^b \in I(X)$, 
$\wt_{\H_{t^b}}(v_0)=\wt_{\H_{t^a}}(v_0)-\alpha(q-1)$, 
$\wt_{\H_{t^b}}(w_0)\geq \sum_{k=1}^{n'} \wt_{\H_{t^b}}\set{w_0,v_{i_k}}\geq \alpha(q-1)$
and, for all $v\in V_\G\setminus\set{v_0,w_0}$, $\wt_{\H_{t^a}}(v) =\wt_{\H_{t^b}}(v)$.
\end{lemma}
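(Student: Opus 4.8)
The plan is to construct $t^b$ by repeatedly ``sliding'' units of weight from the edges $e_{i_k}$ at $v_0$ onto the corresponding edges $\set{w_0,v_{i_k}}$, using type III binomials as the slide moves and Lemma~\ref{lemma: generalised type III} to justify moving a full block of weight $\alpha(q-1)$ at once. Concretely, since $\sum_{k=1}^{n'}\wt_{\H_{t^a}}(e_{i_k})\geq\alpha(q-1)$, I can choose integers $d_k$ with $0\leq d_k\leq\wt_{\H_{t^a}}(e_{i_k})$ and $\sum_{k=1}^{n'} d_k=\alpha(q-1)$; after discarding those $k$ with $d_k=0$ and relabelling, we still have at least two indices (if only one $d_k$ survived it would equal $\alpha(q-1)\geq q-1$, contradicting $d_k\leq\wt_{\H_{t^a}}(e_{i_k})\leq q-2$ in the reduced setting — but since $\H_{t^a}$ here is an arbitrary monomial we may instead split a large $d_k$ across a duplicate of the same edge, or simply observe $n'\geq 2$ already guarantees we can arrange $1\leq d_k\leq q-2$ by a greedy choice; I will spell this out). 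Writing $e_{j_k}=\set{w_0,v_{i_k}}$, Lemma~\ref{lemma: generalised type III} gives $\prod_{k} t_{i_k}^{d_k}-\prod_k t_{j_k}^{d_k}\in I(X)$ (it lies in the ideal generated by type III binomials, hence in $I(X)$).

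Next I would factor $t^a=t^{a'}\prod_k t_{i_k}^{d_k}$ and set
\[
t^b=t^{a'}\prod_{k=1}^{n'} t_{j_k}^{d_k}.
\]
Then $t^a-t^b=t^{a'}\bigl(\prod_k t_{i_k}^{d_k}-\prod_k t_{j_k}^{d_k}\bigr)\in I(X)$, which is the first required conclusion. The weight bookkeeping is then immediate from the construction: passing from $t^a$ to $t^b$ removes exactly $d_k$ from the weight of $e_{i_k}$ and adds exactly $d_k$ to the weight of $e_{j_k}$, for each $k$, and changes no other edge weight. Hence $\wt_{\H_{t^b}}(v_0)=\wt_{\H_{t^a}}(v_0)-\sum_k d_k=\wt_{\H_{t^a}}(v_0)-\alpha(q-1)$; the weight of $w_0$ increases by $\sum_k d_k=\alpha(q-1)$, and since the edges $e_{j_k}$ are among the edges incident to $w_0$ we get $\wt_{\H_{t^b}}(w_0)\geq\sum_k\wt_{\H_{t^b}}(e_{j_k})\geq\sum_k d_k=\alpha(q-1)$; and for $v\notin\set{v_0,w_0}$ the edges incident to $v$ whose weight changed are precisely those among $e_{i_k},e_{j_k}$ incident to $v$, and for each such $v=v_{i_k}$ the loss $d_k$ at $e_{i_k}$ is exactly compensated by the gain $d_k$ at $e_{j_k}$, so $\wt_{\H_{t^a}}(v)=\wt_{\H_{t^b}}(v)$.

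The one point that needs genuine care — the ``main obstacle'' — is ensuring the hypotheses of Lemma~\ref{lemma: generalised type III} are literally met, namely that one can split $\alpha(q-1)$ as $d_1+\cdots+d_{n'}$ with each $1\leq d_k\leq q-2$ while also respecting $d_k\leq\wt_{\H_{t^a}}(e_{i_k})$. The constraint $\sum_k\wt_{\H_{t^a}}(e_{i_k})\geq\alpha(q-1)$ and $n'\geq 2$ allow a greedy allocation, but if some individual edge carries weight $\geq q-1$ in $\H_{t^a}$ one may not directly cap its contribution at $q-2$; the clean fix is to first use a type I generator $t_{i_k}^{q-1}-t_{j_k}^{q-1}$ (or rather to invoke that Lemma~\ref{lemma: generalised type III} only requires $d_k\leq q-2$, and reduce any oversized weight modulo $q-1$ beforehand, absorbing the removed $q-1$ blocks into a preliminary application of the lemma with a smaller $n'$). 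I would handle this by induction on $\alpha$ exactly as in Lemma~\ref{lemma: generalised type III} itself, peeling off one type III generator at a time; the weight-tracking identities above hold at each step, so they hold for the composite. Everything else is the routine verification of the four displayed conditions, carried out as above.
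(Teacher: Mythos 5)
Your construction is essentially the paper's proof: factor $t^a=\bigl(\prod_{k}t_{i_k}^{d_k}\bigr)t^{a'}$ with $\sum_k d_k=\alpha(q-1)$ and $d_k\leq \wt_{\H_{t^a}}(e_{i_k})$, set $t^b=\bigl(\prod_{k}t_{j_k}^{d_k}\bigr)t^{a'}$, conclude $t^a-t^b\in I(X)$ via Lemma~\ref{lemma: generalised type III}, and carry out the same weight bookkeeping. The ``main obstacle'' you dwell on (forcing $1\leq d_k\leq q-2$) can simply be bypassed: the lemma only claims $t^a-t^b\in I(X)$, and the binomial $\prod_{k}t_{i_k}^{d_k}-\prod_{k}t_{j_k}^{d_k}$ vanishes on $X$ directly by Lemma~\ref{lemma: Vanishing lemma} for any choice of $d_k$ summing to $\alpha(q-1)$, so no peeling argument or type I backstop is needed.
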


\begin{proof}
We may write $\textstyle t^a = \bigl(\prod_{k=1}^{n'}t_{i_k}^{d_k}\bigr)t^{a'}$, 
where $\sum_{k=1}^{n'}d_k=\alpha(q-1)$, for some $a'\in \NN^s$. Denote $\set{w_0,v_{i_k}}$  
by $e_{j_1},\dots,e_{j_{n'}}$. Consider $t^b = \bigl(\prod_{k=1}^{n'}t_{j_k}^{d_k}\bigr)t^{a'}$. 
Then, by Lemma~\ref{lemma: generalised type III}, 
\[
\textstyle t^a - t^b =  
\bigl(\prod_{k=1}^{n'}t_{i_k}^{d_k} -\prod_{k=1}^{n'}t_{j_k}^{d_k}\bigr)t^{a'}\in I(X) . 
\]
Additionally, $\wt_{\H_{t^b}}(v_0)=\wt_{\H_{t^{a'}}}(v_0) = \wt_{\H_{t^a}}(v_0) - \alpha(q-1)$ and 
\[
\wt_{\H_{t^b}}(w_0) \geq
\textstyle \sum_{k=1}^{n'} \wt_{\H_{t^b}}\set{w_0,v_{i_k}}\geq \sum_{k=1}^{n'} d_k = \alpha(q-1)\geq q-1. 
\] 
Finally, if $v$ is not an endpoint of $e_{i_1},\dots,e_{i_{n'}}, e_{j_1},\dots,e_{j_{n'}}$ 
then $\wt_{\H_{t^a}}(v)=\wt_{\H_{t^{a'}}}(v)=\wt_{\H_{t^b}}(v)$,
and otherwise, 
$\wt_{\H_{t^a}}(v_{i_k})=d_k+\wt_{\H_{t^{a'}}}(v_{i_k})=\wt_{\H_{t^b}}(v_{i_k})$, for all $k=1,\dots,n'$.
\end{proof}

\begin{lemma}\label{lemma: swap endpoints of edges}
Let $t^a\in S$ be a monomial and let $\H_{t^a}$ be the associated weighted subgraph of $\G$. Given
$i,j\in \set{1,\dots,r}$ such that $i \neq j$, let $\textstyle \Delta^a_{j}$ 
be the total weight of edges between the vertices of $V_\G\setminus P_j$, let
$v_i\in P_i$ be a vertex, and let $\delta^a_{ij}\leq \Delta^a_{j}$ be the total weight of edges between 
$v_i$ and the vertices of $V_\G\setminus P_j$. 
Suppose that there exists an edge $\set{w_1,w_2}\in E(\H_{t^a})$ with 
$w_1,w_2\not \in P_j$ and $w_1,w_2 \neq v_i$.
Then, there exists a monomial $t^b\in S$ such that $t^a-t^b\in I(X)$, 
$\wt_{\H_{t^a}}(v)=\wt_{\H_{t^b}}(v)$, for all $v\in V_\G$, and such that the total
weight of edges between $v_i$ and the vertices of $V_\G\setminus P_j$, $\delta^b_{ij}$, is equal to 
$\min \bigl\{\wt_{\H_{t^a}}(v_i), \Delta^a_{j}\bigr\}$.
\end{lemma}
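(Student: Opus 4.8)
The plan is to pass from $t^a$ to $t^b$ by a finite sequence of elementary \emph{switches}, each one governed by a $4$-cycle of $\G$ (hence by a type II generator), and each one raising $\delta_{ij}$ by exactly $1$ while preserving the weighted degree of every vertex and the value of $\Delta_j$. Note first that for any monomial $t^c$ one has $\delta^c_{ij}\le\min\{\wt_{\H_{t^c}}(v_i),\Delta^c_j\}$, since the edges counted by $\delta^c_{ij}$ are incident to $v_i$ and are contained among those counted by $\Delta^c_j$ (recall $v_i\notin P_j$); moreover the hypothesis of the lemma is precisely the condition $\delta^a_{ij}<\Delta^a_j$. So it suffices to show: given a monomial $t^c$ with $t^a-t^c\in I(X)$, the same weighted degrees as $t^a$, $\Delta^c_j=\Delta^a_j$ and $\delta^c_{ij}<\min\{\wt_{\H_{t^c}}(v_i),\Delta^c_j\}$, there is a monomial $t^{c''}$ with $t^c-t^{c''}\in I(X)$, the same weighted degrees, $\Delta^{c''}_j=\Delta^c_j$ and $\delta^{c''}_{ij}=\delta^c_{ij}+1$. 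Iterating this $\min\{\wt_{\H_{t^a}}(v_i),\Delta^a_j\}-\delta^a_{ij}$ times — a nonnegative integer that drops by $1$ at each step — produces $t^b$, the membership relations adding up to $t^a-t^b\in I(X)$.

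To build one switch, assume $\delta^c_{ij}<\min\{\wt_{\H_{t^c}}(v_i),\Delta^c_j\}$. The total weight in $\H_{t^c}$ of the edges joining $v_i$ to $P_j$ equals $\wt_{\H_{t^c}}(v_i)-\delta^c_{ij}>0$, so there is an edge $\{v_i,x\}$ of positive weight with $x\in P_j$. The total weight in $\H_{t^c}$ of the edges lying inside $V_\G\setminus P_j$ and not incident to $v_i$ equals $\Delta^c_j-\delta^c_{ij}>0$, so there is an edge $\{w_1,w_2\}$ of positive weight with $w_1,w_2\notin P_j$ and $w_1,w_2\neq v_i$. Since $w_1$ and $w_2$ lie in distinct parts of $\G$, at most one of them lies in $P_i$; relabel so that $w_1\notin P_i$. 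Then $v_i,x,w_2,w_1$ are pairwise distinct ($x\in P_j$ while $v_i,w_1,w_2\notin P_j$, and $v_i\neq w_1,w_2$ by choice), and in the cyclic order $v_i,x,w_2,w_1$ consecutive vertices always lie in distinct parts ($v_i\in P_i$ and $x\in P_j$; $x\in P_j$ and $w_2\notin P_j$; $\{w_2,w_1\}\in E(\G)$; $w_1\notin P_i$ and $v_i\in P_i$). Hence these four edges are the consecutive edges of a $4$-cycle of $\G$, so
\[
t_{\{v_i,x\}}\,t_{\{w_2,w_1\}}-t_{\{x,w_2\}}\,t_{\{w_1,v_i\}}
\]
is a type II generator and therefore lies in $I(X)$.

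Finally, let $t^{c''}$ be $t^c$ with its factor $t_{\{v_i,x\}}\,t_{\{w_2,w_1\}}$ (which divides $t^c$, both edges having positive weight) replaced by $t_{\{x,w_2\}}\,t_{\{w_1,v_i\}}$; then $t^c-t^{c''}$ is a monomial multiple of the type II generator above, so $t^c-t^{c''}\in I(X)$ and hence $t^a-t^{c''}\in I(X)$. Comparing $\H_{t^c}$ and $\H_{t^{c''}}$: at each of $v_i,x,w_1,w_2$ one incident edge loses a unit of weight and one gains a unit, so every weighted degree is unchanged; among the four edges whose weight changed, only $\{w_2,w_1\}$ and $\{w_1,v_i\}$ lie inside $V_\G\setminus P_j$ (the first losing, the second gaining a unit), so $\Delta_j$ is unchanged; and among the two altered edges incident to $v_i$, only $\{w_1,v_i\}$ joins $v_i$ to a vertex outside $P_j$, so $\delta_{ij}$ increases by exactly $1$. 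This is the required switch.

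The one genuine obstacle is the combinatorial check in the second step that a type II generator is really available — distinctness of the four vertices $v_i,x,w_1,w_2$ together with the ``consecutive vertices lie in distinct parts'' condition that makes them a $4$-cycle — which is exactly where the multipartite structure and the relabelling $w_1\notin P_i$ enter. Everything else is weight bookkeeping and a finite descent on $\min\{\wt_{\H_{t^a}}(v_i),\Delta^a_j\}-\delta^a_{ij}$.
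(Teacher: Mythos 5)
Your proof is correct and follows essentially the same route as the paper: an iteration of single swaps along $4$-cycles of $\G$ (type II generators), each raising $\delta_{ij}$ by one while preserving all weighted degrees and $\Delta_j$, with the relabelling $w_1\notin P_i$ guaranteeing the $4$-cycle exists. Your bookkeeping is in fact slightly tidier than the paper's, since descending on $\min\bigl\{\wt_{\H_{t^c}}(v_i),\Delta^c_j\bigr\}-\delta^c_{ij}$ and re-deriving the needed edge at every step from $\delta^c_{ij}<\Delta^c_j$ handles the terminal case $\delta_{ij}=\Delta_j<\wt(v_i)$ explicitly, which the paper's induction on $\wt_{\H_{t^a}}(v_i)-\delta^a_{ij}$ leaves implicit.
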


\begin{proof}
Notice that $\wt_{\H_{t^a}}(v_i) \geq \delta^a_{ij}$. We argue by induction on $\wt_{\H_{t^a}}(v_i)-\delta^a_{ij}$.
If $\wt_{\H_{t^a}}(v_i)=\delta^a_{ij}$, we choose $t^b=t^a$ and there is nothing to prove. 
Suppose that $\wt_{\H_{t^a}}(v_i) > \delta^a_{ij}$.
Then there exists an edge $\set{v_i,w_3}\in E(\H_{t^a})$ with $w_3\in P_j$.
Since $\set{w_1,w_2}$ is an edge, one of its vertices does not belong to $P_i$. Assume that $w_1\not \in P_i$.
Then $t_{\set{v_i,w_3}}t_{\set{w_1,w_2}}-t_{\set{v_i,w_1}}t_{\set{w_2,w_3}}$ is a generator of $I(X)$ of type II. 
Writing
$t^a = t_{\set{v_i,w_3}}t_{\set{w_1,w_2}}t^{a'}$, for suitable $a'\in \NN^s$, 
and $t^c=t_{\set{v_i,w_1}}t_{\set{w_2,w_3}}t^{a'}$,
it is clear that $t^a-t^c\in I(X)$; moreover the numbers $\Delta^c_{j}$ and $\wt_{\H_{t^c}}(v_i)$ 
are the same as they were 
for $\H_{t^a}$, however $\delta^c_{ij}=\delta^a_{ij}+1$. By induction, there exists a monomial $t^b \in S$ such that
$t^c-t^b\in I(X)$, hence $t^a-t^b\in I(X)$, 
and such that $\delta^b_{ij} = \min \bigl\{\wt_{\H_{t^c}}(v_i), \Delta^c_{j}\bigr\}= 
\min \bigl\{\wt_{\H_{t^a}}(v_i), \Delta^a_{j}\bigr\}$.
Also by induction, $\wt_{\H_{t^c}}(v)=\wt_{\H_{t^b}}(v)$, for all $v\in V_\G$.
On the level of the graph, the induction step  merely takes two edges and swaps a pair of their endpoints. 
This does not change the weighted degree; therefore $\wt_{\H_{t^c}}(v)=\wt_{\H_{t^a}}(v)$, for all $v\in V_\G$,
and hence, $\wt_{\H_{t^a}}(v)=\wt_{\H_{t^b}}(v)$, for all $v\in V_\G$.
\end{proof}

\begin{theorem}\label{thm: regularity}
Let $X$ be the algebraic toric set associated to an $r$-partite complete graph 
\mbox{$\G =\mathcal{K}_{\alpha_1,\dots,\alpha_r}$}, 
with $r\geq 3$ and $n=\alpha_1+\cdots+\alpha_r \geq 4$. Then 
\begin{equation*}
\reg S/I(X)=\max \set{\alpha_1(q-2),\alpha_2(q-2),\dots,\alpha_r(q-2),\left \lceil (n-1)(q-2)/2 \right \rceil}.
\end{equation*}
\end{theorem}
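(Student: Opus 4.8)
The plan is to follow the strategy outlined at the end of the introduction: pass to the Artinian reduction $S/(I(X)+(t_1))$, where $t_1$ corresponds to some fixed edge $e_1 = \{v_a, v_b\}$, and prove two statements about the integer
$d = \max\{\alpha_1(q-2),\dots,\alpha_r(q-2),\lceil(n-1)(q-2)/2\rceil\}$: first, that there is a monomial of degree $d$ not lying in $I(X)+(t_1)$; second, that every monomial of degree $d+1$ lies in $I(X)+(t_1)$. Since $t_1$ is a nonzerodivisor on the Cohen--Macaulay ring $S/I(X)$ of dimension $1$, this gives $\reg S/I(X) = \reg S/(I(X)+(t_1)) = d$. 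The lower bound half (the existence of a degree-$d$ monomial outside $I(X)+(t_1)$) splits according to which term achieves the maximum. If $d = \lceil(n-1)(q-2)/2\rceil$, I invoke the general inequality quoted from \cite[Corollary 3.13]{GRS}, which already gives $\reg S/I(X) \geq \lceil(n-1)(q-2)/2\rceil$; if instead $d = \alpha_1(q-2)$ (assuming $\alpha_1 \geq \cdots \geq \alpha_r$), I need to exhibit an explicit monomial of degree $\alpha_1(q-2)$ that survives modulo $I(X)+(t_1)$ --- the natural candidate is a monomial supported on a "book" of edges all meeting a single vertex far from $e_1$, say a vertex $v^* \in P_1$ not equal to $v_a$ or $v_b$, with each such edge weighted $q-2$, spread over $\alpha_1$ edges. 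One verifies it is not in $I(X)+(t_1)$ by checking, via Lemma~\ref{lemma: Vanishing lemma} together with the explicit generators of Theorem~\ref{thm: generators}, that no type I, II, or III generator (nor a multiple of $t_1$) can be used to reduce it.

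For the upper bound --- that every monomial $t^a$ of degree $d+1$ lies in $I(X)+(t_1)$ --- the idea is to use the two technical lemmas, Lemma~\ref{lemma: moving the q-1 weight of a vertex} and Lemma~\ref{lemma: swap endpoints of edges}, to rewrite $t^a$ modulo $I(X)$ into a normal form that manifestly contains $t_1$ (or a type I power $t_1^{q-1}$, which together with the relation $t_1^{q-1} - t_j^{q-1} \in I(X)$ and homogeneity lets us conclude). The strategy: given $t^a$ of degree $d+1$, I want to concentrate weight onto edges incident to $v_a$ or $v_b$. First, using Lemma~\ref{lemma: swap endpoints of edges} repeatedly, I push as much weight as possible onto the vertex $v_a$ (and then $v_b$), swapping endpoints of edges to move their weight toward $e_1$; the point is that since $v_a \in P_{i}$ for some $i$ and $v_b \in P_j$ with $i \neq j$, the vertices not in $P_j$ can be made to carry their weight through $v_a$. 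Then, using Lemma~\ref{lemma: moving the q-1 weight of a vertex}, if the weight at $v_a$ reaches $q-1$ along a book of at least two edges with a common "opposite" neighbour, I could instead move that block off $v_a$; the real use, though, is the reverse bookkeeping: I want to show that if $\deg t^a = d+1$ is large enough then the weighted degree of $v_a$ (after the swaps) forces at least one unit of weight onto the specific edge $e_1$ itself, i.e. $t_1 \mid t^b$ for the equivalent monomial $t^b$.

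The heart of the argument is a counting/pigeonhole estimate: after performing all the endpoint-swaps allowed by Lemma~\ref{lemma: swap endpoints of edges}, the weighted subgraph $\H_{t^b}$ has all its weight packed as tightly as possible around the edge $e_1 = \{v_a,v_b\}$, and one shows that a weighted graph on $n$ vertices with total edge-weight $d+1 = \max\{\alpha_1(q-2),\dots\} + 1$, in which (by Lemma~\ref{lemma: Vanishing lemma}, which $t^b$ need not satisfy, so this is where the $q-1$ congruences are NOT available — instead pure weight-counting is used) every vertex other than $v_a, v_b$ carries weighted degree $\leq$ some bound, must have $e_1$ in its support. Concretely: the edges avoiding both $v_a$ and $v_b$ lie in the induced complete multipartite graph on $n-2$ vertices, and if $v_a \in P_1$ the edges through $v_a$ all go to $V_\G \setminus P_1$; one bounds the maximum total weight of a weighted subgraph that avoids $e_1$ and whose "shape" is constrained by the two lemmas, and shows it is at most $d$. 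The inequality to nail down is roughly: a monomial avoiding $t_1$ and put in swap-normal-form has degree at most $\max\{\alpha_1(q-2),\dots,\alpha_r(q-2),\lceil(n-1)(q-2)/2\rceil\}$, and this is exactly where the two competing terms $\alpha_i(q-2)$ (coming from books at a high-multiplicity part) and $\lceil(n-1)(q-2)/2\rceil$ (coming from spreading weight over a near-perfect "matching-like" structure on the $n$ vertices, each edge weighted $q-2$) appear as the two extremal configurations.

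The main obstacle, as I see it, is precisely the upper-bound counting step: proving that the swap-and-move normal form of a $t_1$-free monomial has degree $\leq d$. This requires (i) setting up the normal form carefully so that Lemmas~\ref{lemma: swap endpoints of edges} and \ref{lemma: moving the q-1 weight of a vertex} can be applied without getting stuck (one must track the hypotheses about common neighbours $w_0$ and the auxiliary edge $\{w_1,w_2\}$, using $r \geq 3$ and $n \geq 4$ to guarantee these exist), and (ii) an extremal combinatorial estimate on weighted subgraphs of $\mathcal{K}_{\alpha_1,\dots,\alpha_r}$ avoiding a fixed edge, with per-variable weights bounded by $q-2$ (this bound itself coming from the reduction in \cite[Theorem 4.5]{evencycles} used to prove Theorem~\ref{thm: generators}, so one may assume exponents $\leq q-2$ throughout). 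I expect the bound $\lceil(n-1)(q-2)/2\rceil$ to emerge from a handshake-type argument: $2\deg t^b = \sum_v \wt_{\H_{t^b}}(v)$, and if no vertex other than $v_a,v_b$ may carry too much and $e_1$ is absent, then summing the vertex-degree bounds over the $n-1$ "available" vertices and dividing by $2$ gives the ceiling, while the books at $P_i$ give the $\alpha_i(q-2)$ terms when one part is disproportionately large. Assembling these two extremal regimes into a single clean inequality, uniformly in $q$ and in the $\alpha_i$, is the delicate part.
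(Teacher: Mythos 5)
Your overall strategy (Artinian reduction by $t_1$, lower bound from \cite[Corollary 3.13]{GRS} plus an explicit degree-$\alpha_1(q-2)$ monomial, upper bound by pushing weight around with Lemmas~\ref{lemma: moving the q-1 weight of a vertex} and \ref{lemma: swap endpoints of edges}) is the same as the paper's, but both halves have genuine gaps. For the lower bound, your candidate monomial --- a star of $\alpha_1$ edges of weight $q-2$ centered at a vertex $v^*\in P_1$ away from $e_1$ --- need not exist: a vertex of $P_1$ has only $n-\alpha_1$ neighbours in $\G$, and $n-\alpha_1<\alpha_1$ is possible (e.g.\ $\mathcal{K}_{3,1,1}$), so you cannot always spread $\alpha_1$ edges from it. The paper instead centers the star at $v_2$, an endpoint of $e_1$, taking $t_{\set{v_2,v_3}}^{q-2}\prod_{w\in P_1\setminus\set{v_1}}t_{\set{w,v_2}}^{q-2}$; this choice is what makes the non-membership proof work, because then $\wt(v_1)=0$ and $\wt(w)=q-2$ for $w\in P_1\setminus\set{v_1}$, and any monomial $t_1t^w$ congruent to it mod $I(X)$ is forced by Lemma~\ref{lemma: Vanishing lemma} to have total weight over the independent set $P_1$ at least $(q-1)+(\alpha_1-1)(q-2)>\alpha_1(q-2)$, a contradiction. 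Moreover, ``checking that no type I, II or III generator can be used to reduce it'' is not a proof of non-membership: membership in $I(X)+(t_1)$ can occur through long chains of reductions passing through other monomials. The paper closes this by a Gr\"obner argument (order with $t_1$ least, binomial Gr\"obner basis of $I(X)$, division algorithm) showing that membership would force $t^a\equiv t_1t^w \bmod I(X)$ for some monomial $t^w$, and only then applies the weight count; you need some substitute for this step.

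For the upper bound, the claim you isolate as ``the heart'' --- that a $t_1$-free monomial in swap-normal-form has degree at most $d$ --- is exactly the part you have not proved, and the handshake estimate you sketch does not suffice. After the available reductions (type I generators to cap exponents at $q-2$, Lemma~\ref{lemma: moving the q-1 weight of a vertex} to cap weighted degrees), one only controls the weighted degree of vertices \emph{outside} the chosen representatives $v_1,\dots,v_r$: inequality (\ref{eq: first inequality}) gives $\wt_{\H_{t^a}}(v)\leq q-2$ for $v\notin\set{v_1,\dots,v_r}$, while the representatives can carry arbitrarily large weight, so summing per-vertex bounds over $n-1$ vertices does not yield $\lceil(n-1)(q-2)/2\rceil$. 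What the paper actually does is not an extremal bound on $t_1$-free weighted subgraphs at all (such a bound is false: the whole graph minus $e_1$ with all weights $q-2$ has huge degree); it is a constructive reduction showing every monomial of degree $d+1$ is congruent mod $I(X)$ to one divisible by $t_1$, via a case analysis on whether $v_1$ and $v_2$ carry weight (Cases 1 and 2 and their subcases), using type II generators directly when edges at $v_1$ and $v_2$ reach two different parts, and otherwise combining the two lemmas with the inequalities (\ref{eq: basic inequality}) and (\ref{eq: two inequalities}) to ferry a block of weight $q-1$ onto $v_1$ or $v_2$ and then create the edge $e_1$. This case analysis, including the delicate subcases where all edges from $v_1,v_2$ land in a single part $P_i$ or where the auxiliary edge $\set{w_1,w_2}$ must avoid $v_i$, is the substance of the proof and is missing from your proposal, as you yourself acknowledge.
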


\begin{proof}
Fix vertices $v_i\in P_i\subset V_\G$. Without loss in generality, let $t_1$ be the variable $t_{\set{v_1,v_2}}$.
We assume that $\alpha_1\geq \alpha_2\geq \cdots \geq \alpha_r$. If $\alpha_1=1$ then $\G$ is the complete graph
of $n$ vertices and $\reg S/I(X) = \lceil (n-1)(q-2)/2 \rceil$, as was proved in \cite{GRS2}. From now on we assume that 
$\alpha_1 \geq 2$. By \cite[Corollary 3.13]{GRS}, 
$\reg S/I(X) \geq \left \lceil (n-1)(q-2)/2 \right \rceil$. Let us show that $\reg S/I(X) \geq  \alpha_1(q-2)$.
Consider the monomial of degree $\alpha_1(q-2)$ given by
\[
t^a = t_{\set{v_2,v_3}}^{q-2}\prod_{w\in P_1\setminus \set{v_1}} t_{\set{w,v_2}}^{q-2} \;,
\]
and consider $\H_{t^a}$, the weighted subgraph of $\G$ induced by $t^a$, which is depicted in 
Figure~\ref{fig: support of monomial}. 
\begin{small}
\begin{figure}[h]
\begin{picture}(200,125)(-50,55)
\put(-40,150){$\bullet$}
\put(-50,152){$v_1$}
\put(-40,130){$\bullet$}
\put(-38,133){\line(6,0){60}}
\put(-20,135){$\scriptstyle {q-2}$}
\put(-40,109){$\bullet$}
\put(-39,111){\line(3,1){60}}
\put(-22,122){$\scriptstyle {q-2}$}
\put(-39,87){$\vdots$}
\put(-40,70){$\bullet$}
\put(-38,71){\line(1,1){60}}
\put(-15,90){$\scriptstyle {q-2}$}
\put(-38,113){\oval(30,105)}
\put(-25,163){$P_1$}

\put(20,160){$\bullet$}
\put(21,148){$\vdots$}
\put(20,140){$\bullet$}
\put(20,130){$\bullet$}
\put(26,130){$v_2$}
\put(23,132){\line(0,-1){35}}
\put(25,113){$\scriptstyle {q-2}$}
\put(23,147){\oval(30,50)}
\put(35,170){$P_2$}

\put(10,95){$v_3$}
\put(20,95){$\bullet$}
\put(20,85){$\bullet$}
\put(21,73){$\vdots$}
\put(20,65){$\bullet$}
\put(23,82){\oval(30,50)}
\put(35,55){$P_3$}

\put(60,125){$v_4$}
\put(70,125){$\bullet$}
\put(70,115){$\bullet$}
\put(71,103){$\vdots$}
\put(70,95){$\bullet$}
\put(72,113){\oval(30,50)}
\put(85,135){$P_4$}

\put(96,113){$\dots$}

\put(120,125){$v_r$}
\put(130,125){$\bullet$}
\put(130,115){$\bullet$}
\put(131,103){$\vdots$}
\put(130,95){$\bullet$}
\put(132,113){\oval(30,50)}
\put(145,135){$P_r$}

\end{picture}
\caption{The weighted subgraph associated to $t^a$.}
\label{fig: support of monomial}
\end{figure}
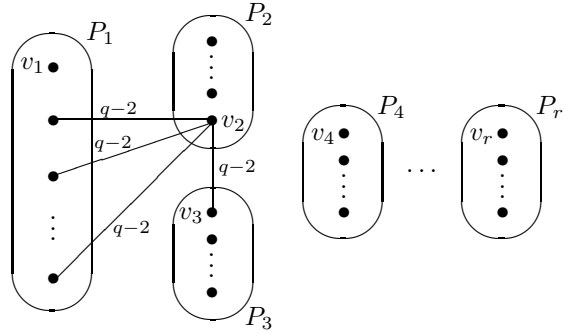
\end{small}

Let us show that  $t^a \not \in I(X)+(t_1)$. Let $\set{f_1,\dots,f_m}$ be a Gr\"obner basis
of $I(X)$. Since, by Theorem~\ref{thm: generators}, $I(X)$ is generated by binomials, we may assume
that $f_i$ is a binomial, for every $i=1,\dots,m$ (\emph{cf.}~\cite[Proposition 1.1 (a)]{ES}, a straighforward consequence of Buchberger's algoritm). 
Since a variable $t_i$ never vanishes on a point of
$X$, we may also assume that each $f_i=t^{a}-t^{b}$ with $\supp a\cap \supp b=\emptyset$. 
Choose a monomial order in which $t_1$ is the 
least variable. Then, since $f_1,\dots,f_m,t_1$ generate $I(X)+(t_1)$, and $t_1$ does not divide the leading
term of any $f_1,\dots,f_m$, we conclude, by Buchberger's algorithm 
(\emph{cf.}~\cite[Theorem~2.4.15]{monalg} and \cite{buch}), 
that $\set{t_1,f_1,\dots,f_m}$ is a Gr\"obner basis of $I(X)+(t_1)$. 
\smallskip

\noindent
Suppose that $t^a \in I(X)+(t_1)$. Then the division algorithm of $t^a$ by $t_1,f_1,\dots,f_m$ produces zero remainder.
Since $t_1$ does not divide $t^a$, there exists $i$ such that $\operatorname{Lt}(f_i)$ divides $t^a$. Write
$\operatorname{Lt}(f_i)= t^{b}$ and, without loss in generality, $f_i=t^{b}-t^{c}$, for some 
$b,c \in \NN^s$. Then there exists $a'\in \NN^s$ such that $t^a = t^b t^{a'}$. Let
\[
t^{a_1} = t^a - t^{a'}f_i = t^{a'} t^c 
\]
be the first partial reduction of $t^a$ modulo $t_1,f_1,\dots,f_m$.
Denote by $t^{a}=t^{a_0},t^{a_1},\dots,t^{a_p}$ 
the full sequence of partial reductions of $t^a$ modulo $t_1,f_1,\dots,f_m$. 
Since division of a monomial by one of $f_i$ always yields a nonzero monomial, we conclude that 
there must exist $p>0$ such that $t^{a_p}$ is divisible by $t_1$ (and, accordingly, $t^{a_{p+1}}=0$). 
Write $t^{a_p}=t_1t^w$, for some $w \in \NN^s$. 
From the fact that $t^{a_{i+1}}-t^{a_i}\in I(X)$, for all $i=0,\dots,p-1$, we 
deduce that 
\[
t^a-t_1t^w \in I(X). 
\]
Consider $\H_{t^a}$ and $\H_{t_1t^w}$ the weighted subgraphs of $\G$ induced by $t^a$ and $t_1t^w$, respectively. 
By the definition of $t^a$ (see also Figure~\ref{fig: support of monomial}), 
$\wt_{\H_{t^a}}(v_1)=0$ and, if $v\in P_1\setminus \set{v_1}$,
\mbox{$\wt_{\H_{t^a}}(v)= q-2$.} According\-ly, using  
Lemma~\ref{lemma: Vanishing lemma}, we deduce that $\wt_{\H_{t_1t^w}}(v_1)\geq q-1$ and, 
if $v\in P_1\setminus \set{v_1}$, that
\mbox{$\wt_{\H_{t_1t^w}}(v)\geq q-2$}. We get:
\[
\alpha_1(q-2)=\deg(t^a)=\deg(t_1t^w)\geq \textstyle\sum_{v\in P_1} \wt_{\H_{t_1t^w}}(v)\geq \alpha_1(q-2)+1, 
\]
which is a contradiction.
We just showed that $t^a\not \in I(X)+(t_1)$, hence $\reg S/I(X) \geq \alpha_1(q-2)$.

\smallskip

Let $d=\max \set{\alpha_1(q-2),\lceil (n-1)(q-2)/2 \rceil}$.
Let us now show that any monomial $t^a\in S$ of degree $d+1$ belongs to $I(X)+(t_1)$, or, equivalently, 
that there exists $t^b\in (t_1)$ such that $t^a\equiv t^b \mod I(X)$. As earlier, we will argue on $\H_{t^a}$,
the weighted subgraph of $\G$ induced by $t^a$.

\smallskip

Let $w_1, w_2 \in V_\G$ such that $\set{w_1,w_2}\in E(\H_{t^a})$. Write $t_{i_0}=t_{\set{w_1,w_2}}$
and $t^a=t_{i_0}^bt^{a'}$, where $b \geq 1$ and $a' \in \NN^s$. Suppose $b \geq q-1$.
If $t_{i_0}=t_1$, then $t^a \in (t_1)+ I(X)$. If $t_{i_0} \neq t_1$, then $g= t_{i_0}^{q-1}-t_1^{q-1}$ 
is a type I generator of $I(X)$. Writing $t^a=t_{i_0}^{q-1}t^{a''}$, for some $a'' \in \NN^s$, we have
$t^a=t_1^{q-1}t^{a''}+gt^{a''} \in (t_1)+I(X)$.
Therefore, we may assume that if $t_{i_0}^b$ divides $t^a$, then $b \leq q-2$. 
This implies we may also assume that if $v \in V_\G$ is such that $\wt_{\H_{t^a}}(v)\geq q-1$, then there exist at least
two edges in $\H_{t^a}$ that have $v$ as an endpoint.

\smallskip

We start by deriving a basic inequality. Suppose that $v_0\in P_i\setminus  \set{v_1,\dots,v_r}$ has 
$\wt_{\H_{t^a}}(v_0)\geq q-1$. 
Then using Lemma~\ref{lemma: moving the q-1 weight of a vertex} with $w_0=v_i$ and $\alpha$ 
a positive integer such that 
$(\alpha +1)(q-1)> \wt_{\H_{t^a}}(v_0)\geq \alpha (q-1)$, we see that there exists $t^b\in S$ such that 
$t^a\equiv t^b \mod I(X)$ and $\wt_{\H_{t^b}}(v_0)= \wt_{\H_{t^a}}(v_0)-\alpha(q-1)\leq q-2$.
Accordingly, we may assume that 
\begin{equation}\label{eq: first inequality}
\mbox{for all } v\not \in \set{v_1,\dots,v_r}, \; \wt_{\H_{t^a}}(v)\leq q-2.
\end{equation}

Hence,
$\textstyle \sum_{v\in V_\G} \wt_{\H_{t^a}}(v)\leq \sum_{j=1}^r \wt_{\H_{t^a}}(v_j) + (n-r)(q-2).$

\medskip

Since 
$\textstyle \sum_{v\in V_\G} \wt_{\H_{t^a}}(v) = 2(d+1)\geq 2\left \lceil \frac{n-1}{2}(q-2) \right \rceil +2 \geq (n-1)(q-2)+2$, 
we get
\begin{equation}\label{eq: basic inequality}
\textstyle \sum_{j=1}^r \wt_{\H_{t^a}}(v_j) \geq (r-1)(q-2)+2. 
\end{equation}

\emph{Case 1.} Suppose that $\wt_{\H_{t^a}}(v_1),\wt_{\H_{t^a}}(v_2)>0$. 

\emph{Subcase 1.1.} Suppose $v_1$ and $v_2$ have edges in $\H_{t^a}$ 
connecting them to $2$ vertices in distinct $P_i$, say $\set{v_1,w_1}, \set{v_2,w_2}$, 
with $w_1\in P_i$ and $w_2\in P_j$, and $i\not = j$.
If $w_1=v_2$ or $w_2=v_1$, then $t_1$ divides $t^a$ and we are done.
If $w_1 \neq v_2$ and $w_2 \neq v_1$, then since
$\set{w_1,w_2}$ belongs to  $E(\G)$, we have that $t_{\set{v_1,w_1}}t_{\set{v_2,w_2}}-t_1 t_{\set{w_1,w_2}}$ is a 
type II generator of $I(X)$. As, by assumption, $t_{\set{v_1,w_1}}t_{\set{v_2,w_2}}$ divides $t^a$, we may write 
$t^a = t_{\set{v_1,w_1}}t_{\set{v_2,w_2}}t^{a'}$, for a suitable $a' \in \NN^s$. Then,
$t^a \equiv t_1t_{\set{w_1,w_2}}t^{a'} \mod I(X)$,
as required. 

\emph{Subcase 1.2.} Suppose there exists $i\in \set{1,\dots,r}$ such that all edges in $\H_{t^a}$
from $v_1$ and $v_2$ have endpoints in $P_i$. Clearly $i\in \set{3,\dots,r}$. 
Assume, additionally, that there exists and edge 
$\set{w_1,w_2}\in E(\H_{t^a})$, such that $w_1,w_2\not \in P_i$. Then, necessarily $w_1,w_2 \not = v_1$ and
$w_1,w_2 \not = v_2$ and at least one of $\set{v_2,w_1}$ or $\set{v_2,w_2}$ belongs to $E(\G)$. 
Assume this is the case with $\set{v_2,w_1}$. Let $z\in P_i$ be 
an endpoint of an edge in $\H_{t^a}$ incident to $v_2$. 
Then $t_{\set{v_2,z}}t_{\set{w_1,w_2}}-t_{\set{v_2,w_1}}t_{\set{z,w_2}}$ is a type II generator of $I(X)$ and writing
$t^a=t_{\set{v_2,z}}t_{\set{w_1,w_2}}t^{a'}$, for a suitable $a' \in \NN^s$, we get
$t^a \equiv t_{\set{v_2,w_1}}t_{\set{z,w_2}}t^{a'} \mod I(X)$. Denote $t^e = t_{\set{v_2,w_1}}t_{\set{z,w_2}}t^{a'}$. 
Then $t^e$ satisfies the assumptions of Subcase 1.1 as $w_1\not \in P_i$, $\set{v_2,w_1}\in E(\H_{t^e})$, and
all the edges in $E(\H_{t^e})$ starting  at $v_1$ have the other endpoint in $P_i$.

\emph{Subcase 1.3.} Suppose there exists $i\in \set{3,\dots,r}$ 
such that all edges in $\H_{t^a}$ from $v_1$ and $v_2$ have endpoints in $P_i$.
Suppose in addition that all other edges in $\H_{t^a}$ are also incident to vertices of $P_i$. 
Using the assumption on $\deg(t^a)$ and (\ref{eq: first inequality}), we get 
\begin{equation}\label{eq: two inequalities}
\renewcommand{\arraystretch}{1.3}
\left \{
\begin{array}{l}
\sum_{v\in P_i} \wt_{\H_{t^a}}(v) = d+1 \geq \alpha_1(q-2) +1 \geq 2(q-2)+1 = (q-2)+(q-1)\\
\wt_{\H_{t^a}}(v_i)= d+1 -\sum_{v\in P_i\setminus \set{v_i}} \wt_{\H_{t^a}}(v)\geq \alpha_i(q-2)+1 -(\alpha_i -1)(q-2) = q-1. 
\end{array}
\right.
\end{equation}

The first inequality implies that either, $\Delta^a_{1}$, the total weight of the edges in $\H_{t^a}$ 
between the vertices of $V_\G\setminus P_1$ is $\geq q-1$, or $\Delta^a_{2}$, the total weight of 
edges in $\H_{t^a}$ between the vertices of $V_\G\setminus P_2$ is $\geq q-1$. 
Assume, without loss of generality that the latter is true. 

Let $\set{w_1,w_2}\in E(\H_{t^a})$ be an edge with $w_1,w_2\not \in P_2$ and assume $w_1,w_2 \neq v_i$.
Then, by Lemma~\ref{lemma: swap endpoints of edges} (with $j=2$), 
there exists $t^b \in S$ such that $t^a\equiv t^b \mod I(X)$,
$\wt_{\H_{t^b}}(v_1)=\wt_{\H_{t^a}}(v_1)>0$, and such that
the total weight of the edges of $\H_{t^b}$ between $v_i$ and $V_\G\setminus P_2$, 
$\delta^b_{i2}$, is equal to $\min \bigl\{\wt_{\H_{t^a}}(v_i), \Delta^a_{2}\bigr\}$.
From the second inequality of (\ref{eq: two inequalities}) and $\Delta^a_{2} \geq q-1$, we get 
$\delta^b_{i2}\geq q-1$. 
Observe also that the use of Lemma~\ref{lemma: swap endpoints of edges} guarantees that all
edges in $\H_{t^b}$ are still incident to vertices of $P_i$.

As done in a previous argument, if $t^b=t_k^lt^{b'}$, where $l \geq q-1$ and $b' \in \NN^s$, then
$t^b \in (t_1)+ I(X)$, and so, we may assume that if $t_k^l$ divides $t^b$, then $l \leq q-2$.
Since $\delta^b_{i2}\geq q-1$, there exist at least two edges in $\H_{t^b}$ that have $v_i$ as an
endpoint and such that the other endpoints are in $V_\G\setminus (P_2 \cup P_i)$.

We now use Lemma~\ref{lemma: moving the q-1 weight of a vertex} with $v_i$ and $v_2$ instead of $v_0$ and $w_0$. 
Then there exists a monomial $t^c \in S$ such that $t^b\equiv t^c \mod I(X)$,
$\wt_{\H_{t^c}}(v_1)=\wt_{\H_{t^b}}(v_1)>0$, and
$\wt_{\H_{t^c}}(v_2)\geq q-1$.

Choose $z \in V_\G\setminus (P_2 \cup P_i)$ as one of the endpoints mentioned above ($t_{\set{v_i,z}}$ dividing $t^b$),
and such that $t_{\set{v_i,z}}$ is used in the ``transfer of weight'' from $v_i$ to $v_2$ of 
Lemma~\ref{lemma: moving the q-1 weight of a vertex}. 
As a consequence, $t_{\set{z,v_2}}$ divides $t^c$. 



If $z=v_1$, then $t^c \in (t_1)$, and we are done.

Consider the case when $z \neq v_1$. 
Since $\wt_{\H_{t^c}}(v_1)>0$, there exists $u \in V_\G$ such that $t_{\set{v_1,u}}$ divides $t^c$.
If $u=v_2$, we are again done. If $u \neq v_2$, and since
all edges in $\H_{t^b}$ are incident to vertices of $P_i$, we must have
$u \in P_i$.

Now, $\H_{t^c}$ satisfies the assumptions of Subcase 1.1. 

We still need to consider the case when all edges $\set{w_1,w_2}\in E(\H_{t^a})$ with $w_1,w_2\not \in P_2$
are such that $w_1 = v_i$ or $w_2 = v_i$.
In this situation, $\delta^a_{i2} = \Delta^a_{2} \geq q-1$, and we repeat the above argument, using 
Lemma~\ref{lemma: moving the q-1 weight of a vertex} for $t^a$ instead of $t^b$.  
\smallskip

\emph{Case 2.} Suppose that $\wt_{\H_{t^a}}(v_1)\wt_{\H_{t^a}}(v_2)=0$. 

Assume, without loss of generality, that $\wt_{\H_{t^a}}(v_1)=0$
(if $\wt_{\H_{t^a}}(v_2)=0$, we argue in the same way, exchanging $v_1$ and $v_2$). 
Then from (\ref{eq: basic inequality}) we get 
\begin{equation}\label{eq: degenerate basic inequality}
\textstyle \sum_{i=2}^r  \wt_{\H_{t^a}} (v_i) \geq (r-1)(q-2) +2 \;.
\end{equation}
Denote by $\Delta^a_{1}$ the total weight of the edges in $\H_{t^a}$ between the vertices of $V_\G\setminus P_1$.
By (\ref{eq: first inequality}),
\[
\textstyle \alpha_1(q-2)+1 \leq d+1 = \sum_{v\in P_1} \wt_{\H_{t^a}}(v) +\Delta^a_1 \leq (\alpha_1-1)(q-2) +\Delta^a_1 \;, 
\]
and thus we deduce that $\Delta^a_1 \geq q-1$.

\emph{Subcase 2.1.} Assume $\wt_{\H_{t^a}}(v_2)\geq (q-1)+1$. 

Let $\set{w_1,w_2}\in E(\H_{t^a})$ be an edge with $w_1,w_2\not \in P_1$ and assume $w_1,w_2 \neq v_2$.
Then, Lemma~\ref{lemma: swap endpoints of edges} gives 
a monomial $t^b$ such that $t^a-t^b\in I(X)$,
$\wt_{\H_{t^b}}(v_1)=\wt_{\H_{t^a}}(v_1)=0$, $\wt_{\H_{t^b}}(v_2)=\wt_{\H_{t^a}}(v_2) \geq (q-1)+1$,
and such that the total weight of the edges from $v_2$ to the 
vertices of $V_\G\setminus P_1$, $\delta^b_{21}=\min \bigl\{\wt_{\H_{t^a}}(v_2), \Delta^a_{1}\bigr\}$, 
is $\geq q-1$. By Lemma~\ref{lemma: moving the q-1 weight of a vertex} 
(with $\alpha=1$, and $v_2$ and $v_1$ instead of $v_0$ and $w_0$), we get a monomial $t^c$ 
such that $t^b-t^c\in I(X)$,
$\wt_{\H_{t^c}}(v_2)=\wt_{\H_{t^b}}(v_2)-(q-1) \geq 1$ and $\wt_{\H_{t^c}}(v_1) \geq (q-1)$.
Therefore, $\H_{t^c}$ satisfies the assumptions of Case 1. 

If all edges $\set{w_1,w_2}\in E(\H_{t^a})$ with $w_1,w_2\not \in P_1$
are such that $w_1 = v_2$ or $w_2 = v_2$, then
$\delta^a_{21} = \Delta^a_{1} \geq q-1$, and repeating the argument (using 
Lemma~\ref{lemma: moving the q-1 weight of a vertex} for $t^a$ instead of $t^b$), we fall again in Case 1. 

\emph{Subcase 2.2.} Suppose that $1 \leq \wt_{\H_{t^a}}(v_2)\leq q-1$. Then (\ref{eq: degenerate basic inequality})
implies that there exists $i\in \set{3,\dots,r}$
such that $\wt_{\H_{t^a}}(v_i)\geq q-1$. We argue as in the previous subcase. 

Let $\set{w_1,w_2}\in E(\H_{t^a})$ be an edge with $w_1,w_2\not \in P_1$ and assume $w_1,w_2 \neq v_i$.
Then, Lemma~\ref{lemma: swap endpoints of edges} gives 
a monomial $t^b$ such that $t^a-t^b\in I(X)$,
$\wt_{\H_{t^b}}(v_1)=\wt_{\H_{t^a}}(v_1)=0$, $\wt_{\H_{t^b}}(v_2)=\wt_{\H_{t^a}}(v_2) \geq 1$,
and such that 
$\delta^b_{i1}=\min \bigl\{\wt_{\H_{t^a}}(v_i), \Delta^a_{1}\bigr\} \geq q-1$. 
By Lemma~\ref{lemma: moving the q-1 weight of a vertex} 
(with $\alpha=1$, and $v_i$ and $v_1$ instead of $v_0$ and $w_0$), we get a monomial $t^c$ 
such that $t^b-t^c\in I(X)$,
$\wt_{\H_{t^c}}(v_i)=\wt_{\H_{t^b}}(v_i)-(q-1)$, $\wt_{\H_{t^c}}(v_1) \geq (q-1)$,
and $\wt_{\H_{t^c}}(v_2)=\wt_{\H_{t^b}}(v_2) \geq 1$. 
Once again $\H_{t^c}$ satisfies the assumptions of Case 1. 

If all edges $\set{w_1,w_2}\in E(\H_{t^a})$ with $w_1,w_2\not \in P_1$
are such that $w_1 = v_i$ or $w_2 = v_i$, then
$\delta^a_{i1} = \Delta^a_{1} \geq q-1$, and repeating the argument (using 
Lemma~\ref{lemma: moving the q-1 weight of a vertex} for $t^a$ instead of $t^b$), we fall again in Case 1. 

\emph{Subcase 2.3.} Suppose that $\wt_{\H_{t^a}}(v_2)=0$. Then, as in Subcase 2.2, 
there exists $i\in \set{3,\dots,r}$ such that $\wt_{\H_{t^a}}(v_i)\geq q-1$. 
Since $\wt_{\H_{t^a}}(v_2)=0$, we can repeat for $\Delta^a_{2}$ what we did for $\Delta^a_{1}$:
\[
\textstyle \alpha_2(q-2)+1 \leq d+1 = \sum_{v\in P_2} \wt_{\H_{t^a}}(v) +\Delta^a_2 \leq (\alpha_2-1)(q-2) +\Delta^a_2 \;, 
\]
and conclude that $\Delta^a_2 \geq q-1$.
Using Lemmas~\ref{lemma: swap endpoints of edges}
and \ref{lemma: moving the q-1 weight of a vertex}, this time moving the weight of $v_i$ 
towards $v_2$, we can reduce to either
Subcase 2.1 or Subcase 2.2.

\medskip

This completes the case analysis. We conclude that every monomial $t^a$ of degree 
$d+1$, where \mbox{$d=\max\set{\alpha_1(q-2),\dots,\alpha_r(q-2),\lceil (n-2)(q-2)/2 \rceil}$}, 
belongs to $I(X)+(t_1)$; 
in other words, that $\reg S/I(X)\leq\max\set{\alpha_1(q-2),\dots,\alpha_r(q-2),\lceil (n-2)(q-2)/2 \rceil}$; 
which completes the proof of the theorem.
\end{proof}

\bibliographystyle{plain}

\begin{thebibliography}{20}

\bibitem{Boll}{B.~Bollob\'as, {\em Modern Graph Theory\/}, 
Graduate Texts in Mathematics {\bf 184}, Springer-Verlag, 
New York, 1998.}
 
\bibitem{buch}{B.~Buchberger, \emph{Gr\"obner bases: An algorithmic method in polynomial ideal theory}. 
Recent trends in multidimensional system theory, N.B. Bose, Ed., Reidel.} 
 
\bibitem{duursma-renteria-tapia} I.~M.~Duursma, C.~Renter\'\i a and
H.~Tapia-Recillas, {\em Reed-Muller codes on complete intersections\/}, Appl. Algebra Engrg.
Comm. Comput.  {\bf 11}  (2001),  no. 6, 455--462.

\bibitem{ES} D.~Eisenbud and B.~Sturmfels, 
{\em Binomial Ideals}, Duke Math. J. {\bf 84} (1996), no. 1, 1-45.
 
\bibitem{geramita-cayley-bacharach} A.~V.~Geramita, M.~Kreuzer and L.~Robbiano, 
{\em Cayley-Bacharach schemes and their canonical modules}, Trans. Amer.
Math. Soc. {\bf 339}  (1993),  no. 1, 163--189.

\bibitem{graphsrings} I.~Gitler and R.~H.~Villarreal, 
\emph{Graphs, Rings and Polyhedra},  
Aportaciones Mat. Textos, \textbf{35}, Soc. Mat. Mexicana, Mexico, 2011. ISBN: 978-607-02-1630-5.

\bibitem{GR} M.~Gonz\'alez-Sarabia and C.~Renter\'ia Marquez,
\emph{Evaluation codes associated to complete bipartite graphs},
Int. J. Algebra \textbf{2} (2008), no. 1--4, 163-170. 

\bibitem{GRH} M.~Gonz\'alez-Sarabia, C. Renter\'\i a and M. Hern\'andez de la Torre,
\emph{Minimum distance and second generalized Hamming weight of two
particular linear codes}, Congr. Numer. {\bf 161} (2003), 105--116.

\bibitem{GRS2} M.~Gonz\'alez-Sarabia, C.~Renter\'\i a and
E.~Sarmiento, \emph{Parameterized codes over some embedded sets and their applications to complete graphs},
to appear in Mathematical Communications.

\bibitem{GRS} M.~Gonz\'alez-Sarabia, C.~Renter\'\i a and
E. Sarmiento, \emph{Projective Parameterized Linear codes arising from some matrices and their main parameters}.
Preprint 2011, arXiv:1108.6114v2 [cs.IT].

\bibitem{Hansen1} J.~Hansen, 
{\em Points in uniform position and maximum distance separable codes. Zero-Dimensional Schemes}, 
(Ravello 1992), de Gruyter, Berlin (1994), 205-211.

\bibitem{Hansen2} J.~Hansen, 
{\em Linkage and codes on complete intersections}, 
Appl. Algebra Engrg. Comm. Comput. {\bf 14} (2003), 175--185.

\bibitem{LRV} H.~H.~L\'opez, C.~Renter\'\i a and R.~H.~Villarreal,
\emph{Affine cartesian codes}. Preprint 2012,  arXiv:1202.0085v3  [math.AC].

\bibitem{param-affine-codes} H.~H.~L\'opez, E.~Sarmiento, M.~Vaz Pinto and R.~H.~Villarreal,
\emph{Parameterized affine codes}, Studia Sci.~Math.~Hungar.~\textbf{49} (2012), no.~3, 406--418.

\bibitem{LV} H.~H.~L\'opez and R.~H.~Villarreal,
\emph{Complete intersections in binomial and lattice ideals}, 
Preprint 2012,  arXiv:1205.0772v2  [math.AC].

\bibitem{degentorus} H.~H.~L\'opez, R.~H.~Villarreal and L.~Z\'arate,
\emph{Complete intersection vanishing ideals on degenerate tori over finite fields}, 
Preprint 2012, arXiv:1207.0244v1 [math.AC].

\bibitem{evencycles} J.~Neves, M.~Vaz Pinto and R. Villarreal, {\it Vanishing ideals over graphs and even cycles}, 
Preprint 2012, arXiv:1111.6278v3 [math.AC], to appear in Comm.~Algebra.

\bibitem{algcodes} C.~Renter\'\i a, A.~Simis and R.~H.~Villarreal,
{\em Algebraic methods for parameterized codes
and invariants of vanishing ideals over finite fields}, Finite Fields
Appl. {\bf 17} (2011), no. 1, 81--104.
 
\bibitem{ci-codes} E.~Sarmiento, M.~Vaz Pinto and R.~H.~Villarreal, 
{\em The minimum distance of parameterized codes on projective tori}, 
Appl. Algebra Engrg. Comm. Comput. {\bf 22} (2011), no. 4, 249--264.
 
\bibitem{vanishing} E.~Sarmiento, M.~Vaz Pinto and R.~H.~Villarreal, 
{\em On the vanishing ideal 
of an algebraic toric set and its parameterized linear codes}, J. Algebra Appl. {\bf 11} (2012), no. 4, 1250072.

\bibitem{Sta1}{R. Stanley, {\em Hilbert functions of graded
algebras}, Adv. Math. {\bf 28} (1978), 57--83.}
 
\bibitem{deg-and-reg} M.~Vaz Pinto and R.~H.~Villarreal, {\em The degree
and regularity of vanishing ideals of algebraic toric sets over finite fields}. 
Preprint, 2011, {\rm arXiv:1110.2124v1 [math.AC]}, to appear in Comm.~Algebra.
 
\bibitem{monalg}{R.~H.~Villarreal, {\it Monomial Algebras\/},
Monographs and Textbooks in Pure and Applied Mathematics {\bf 238}, Marcel
Dekker, New York, 2001.}

\bibitem{reesdegree} R.~H.~Villarreal, \emph{Rees algebras of edge ideals}, Comm. Algebra. \textbf{23} (1995), 
no. 9, 3513-3524.

\end{thebibliography}

\end{document}